\newtheorem{lm}{Lemma}[section]
\newtheorem{prop}[lm]{Proposition}
\newtheorem{teo}[lm]{Theorem}
\newtheorem{coro}[lm]{Corollary}
\theoremstyle{definition}
\newtheorem{oss}[lm]{Remark}
\newtheorem{defi}[lm]{Definition}
\newtheorem*{ack}{Acknowledgments}
\author[Brasco]{L. Brasco}
\author[Cinti]{E. Cinti}
\address[L.\ Brasco]{Dipartimento di Matematica e Informatica
	\newline\indent
	Universit\`a degli Studi di Ferrara
	\newline\indent
	Via Machiavelli 35, 44121 Ferrara, Italy}
\address{{\it and }
	Aix-Marseille Universit\'e, CNRS
	\newline\indent
	Centrale Marseille, I2M, UMR 7373, 39 Rue Fr\'ed\'eric Joliot Curie
	\newline\indent
	13453 Marseille, France}
\email{c}
\address[E.\ Cinti]{Dipartimento di Matematica
	\newline\indent
	Universit\`a degli Studi di Bologna
	\newline\indent
	 Piazza di Porta San Donato 5, 40126 Bologna, Italy}
\email{eleonora.cinti5@unibo.it}
\keywords{Hardy inequality, nonlocal operators, fractional Sobolev spaces.}
\subjclass[2010]{39B72, 35R11, 46E35}
\numberwithin{equation}{section}
\date{\today}
\title[Fractional Hardy inequality]{On fractional Hardy inequalities\\in convex sets}
\begin{document}

\begin{abstract}
We prove a Hardy inequality on convex sets, for fractional Sobolev-Slobodecki\u{\i} spaces of order $(s,p)$. The proof is based on the fact that in a convex set the distance from the boundary is a superharmonic function, in a suitable sense. The result holds for every $1<p<\infty$ and $0<s<1$, with a constant which is stable as $s$ goes to $1$.
\end{abstract}

\maketitle
\begin{center}
\begin{minipage}{8cm}
\small
\tableofcontents
\end{minipage}
\end{center}

\section{Introduction}

\subsection{A quick overview on Hardy inequality} Given an open set $\Omega \subset \mathbb{R}^N$ with Lipschitz boundary, we will use the notation
\[
d_\Omega(x):=
\left\{\begin{array}{rl}
\inf\limits_{y\in \partial \Omega} |x-y|, & \mbox{ if } x\in \Omega,\\
0, & \mbox{ if } x\in \mathbb{R}^N\setminus \Omega.
\end{array}
\right.
\]
A fundamental result in the theory of Sobolev spaces is the {\it Hardy inequality}
\begin{equation}
\label{hardy2}
C_\Omega\,\int_\Omega \frac{|u|^2}{d_\Omega^2}\,dx\le \int_\Omega |\nabla u|^2\,dx,\qquad \mbox{ for every }u\in C^\infty_0(\Omega),
\end{equation}
see for example \cite[Theorem 21.3]{OK}.
It is well-known that for a convex set $K\subset \mathbb R^N$, such an inequality holds with the dimension-free universal constant 
\[
C_K=\frac{1}{4},
\]
see for example \cite[Theorem 2]{Da}.
Moreover, such a constant is sharp. In order to explain the aims and techniques of the present paper, it is useful to recall a proof of this fact. 
\par
A well-known and very elegant way of proving \eqref{hardy2} with sharp constant for convex sets, consists in mimicking {\it Moser's logarithmic estimate} for positive supersolutions of elliptic partial differential equations. 
The starting point is the observation that on a convex set $K$ we have $-\Delta d_K\ge 0$, i.e. the distance function is superharmonic. More precisely, it holds
\begin{equation}
\label{eq}
\int_K \langle \nabla d_K,\nabla \varphi\rangle\,dx\ge 0,\qquad \mbox{ for every nonnegative }\varphi\in C^\infty_0(K).
\end{equation}
By following Moser (see \cite[page 586]{Mo}), one can test the equation \eqref{eq} with\footnote{Of course, such a test function is not in $C^\infty_0(K)$. However, by a standard density argument, in \eqref{eq} we can allow $W^{1,2}$ test functions with compact support. We avoid this unessential technicality for ease of readability.} 
\[
\varphi=\frac{u^2}{d_K},
\]
where $u\in C^\infty_0(K)$. This gives
\[
2\,\int_K \left\langle \frac{\nabla d_K}{d_K},\nabla u\right\rangle\,u\,dx-\int_K \left|\frac{\nabla d_K}{d_K}\right|^2\,u^2\,dx\ge 0,
\]
that is
\[
\int_K \left|\frac{\nabla d_K}{d_K}\right|^2\,u^2\,dx\le 2\,\int_K \left\langle \frac{\nabla d_K}{d_K},\nabla u\right\rangle\,u\,dx.
\]
We now use Young's inequality 
\[
\langle a,b\rangle\le \frac{|a|^2}{2}+\frac{|b|^2}{2},\qquad a,b\in\mathbb{R}^N,
\]
with the following choices
\[
a=\sqrt{\delta}\,\frac{\nabla d_K}{d_K}\,u\qquad \mbox{ and }\qquad b=\frac{1}{\sqrt{\delta}}\,\nabla u,
\]
where $\delta$ is an arbitrary positive real number.
This leads to
\[
\int_K \left|\frac{\nabla d_K}{d_K}\right|^2\,u^2\,dx\le \delta\,\int_K \left|\frac{\nabla d_K}{d_K}\right|^2\,u^2\,dx+\frac{1}{\delta}\,\int_K |\nabla u|^2\,dx,
\]
which can be recast as 
\[
\delta\,(1-\delta)\,\int_K \left|\frac{\nabla d_K}{d_K}\right|^2\,u^2\,dx\le \int_K |\nabla u|^2\,dx.
\]
It is now sufficient to observe that the term $\delta\,(1-\delta)$ in the left-hand side is maximal for $\delta=1/2$. This leads to the Hardy inequality with the claimed sharp constant $1/4$, once it is observed that
\[
|\nabla d_K|=1,\qquad \mbox{ a.\,e. in }K.
\]
The latter implies that more generally for every $1<p<\infty$ we have $-\Delta_p \,d_K\ge 0$ in $K$, i.e. $d_K$ is $p-$superharmonic in the following sense
\[
\int_K \langle |\nabla d_K|^{p-2}\,\nabla d_K,\nabla \varphi\rangle\,dx\ge 0,\qquad \mbox{ for every nonnegative }\varphi\in C^\infty_0(K).
\]
By testing this with $\varphi=|u|^p/d_K^{p-1}$ and suitably adapting the proof above, one can prove the more general Hardy inequality for convex sets
\begin{equation}
\label{localp}
\left(\frac{p-1}{p}\right)^p\,\int_K \frac{|u|^p}{d_K^p}\,dx\le \int_K |\nabla u|^p\,dx.
\end{equation}
Once again, the constant appearing in \eqref{localp} is sharp and independent of both $K$ and the dimension $N$.

\subsection{Main result} The scope of the present paper is to prove a {\it fractional version} of Hardy inequality for convex sets, by adapting to the fractional setting the Moser-type proof presented above. An essential feature of our method is that the relevant constant appearing in the Hardy inequality {\bf is stable as the fractional order of differentiability $s$ converges to} $1$, see Remark \ref{oss:s} below.
More precisely, we prove the following
\begin{teo}[Hardy inequality on convex sets]
\label{teo:torino}
Let $1<p<\infty$ and $0<s<1$. Let $K\subset\mathbb{R}^N$ be an open convex set such that $K\not= \mathbb{R}^N$. Then for every $u\in C^\infty_0(K)$ we have
\begin{equation}
\label{nostra}
\frac{\mathcal{C}}{s\,(1-s)}\,\int_K \frac{|u|^p}{d_K^{s\,p}}\,dx\le \iint_{\mathbb{R}^N\times\mathbb{R}^N} \frac{|u(x)-u(y)|^p}{|x-y|^{N+s\,p}}\,dx\,dy,
\end{equation}
for a computable constant $\mathcal{C}=\mathcal{C}(N,p)>0$.
\end{teo}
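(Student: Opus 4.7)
The plan is to lift the Moser-type argument outlined in the introduction to the nonlocal setting, by producing fractional counterparts of its three local ingredients: the superharmonicity of $d_K$, the Moser test function $|u|^p/d_K^{p-1}$, and Young's inequality.

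The first and most delicate step is to establish a nonlocal superharmonicity statement on convex sets. Specifically, the aim is to show that for the power $\alpha = s$, the function $w := d_K^s$ is a weak supersolution of the fractional $p$-Laplacian in $K$, i.e.
\[
\iint_{\mathbb{R}^N\times \mathbb{R}^N} \frac{|w(x)-w(y)|^{p-2}\,(w(x)-w(y))\,(\varphi(x)-\varphi(y))}{|x-y|^{N+s\,p}}\,dx\,dy \ge 0
\]
for every nonnegative $\varphi \in C^\infty_0(K)$. The strategy is to first verify this for the model case of a half-space $K = \{x_N > 0\}$, where $w$ reduces to $(x_N)_+^s$ and the inequality can be obtained by a direct singular-integral computation (classical for $p=2$ via the $s$-harmonicity of $(x_N)_+^s$, and extendable to general $p$ by exploiting the monotonicity of $(x_N)_+^s$ together with elementary symmetry arguments in the defining integral). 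For an arbitrary convex $K$, one uses that $d_K$ is the pointwise infimum of the distances to the supporting half-spaces, and invokes an infimum-of-supersolutions principle for the fractional $p$-Laplacian to transfer the claim from half-spaces to $K$.

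Granted this superharmonicity, I would then test it with the Moser-type choice $\varphi = |u|^p/w^{p-1}$, which becomes admissible after a standard truncation/density argument since $w > 0$ on the support of $u \in C^\infty_0(K)$. The role of Young's inequality is then played by a pointwise algebraic inequality of discrete Picone type with a Hardy remainder, of the schematic form
\[
|a-b|^{p-2}(a-b)\left(\frac{A^p}{a^{p-1}} - \frac{B^p}{b^{p-1}}\right) \le |A-B|^p - R(a,b,A,B),\qquad a,b > 0,\ A,B \ge 0,
\]
where the nonnegative remainder $R$ plays the role of the $\delta(1-\delta)$ gap in the local proof. Applying this pointwise with $a=w(x)$, $b=w(y)$, $A=|u(x)|$, $B=|u(y)|$ and integrating against the kernel $|x-y|^{-(N+sp)}$, the superharmonicity forces the total integral of $R$ to be dominated by the Gagliardo seminorm of $u$. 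A lower bound on $R$, combined with the $s$-H\"older estimate $|w(x)-w(y)| \le |x-y|^s$ and the $1$-Lipschitz property of $d_K$, then yields the weighted $L^p$ quantity on the left-hand side of \eqref{nostra}.

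The main obstacle is Step 1 in the nonlinear case $p\ne 2$, where the Riesz-potential/linear techniques available for $p=2$ must be replaced by a genuinely nonlinear argument (either a viscosity-style comparison principle for the fractional $p$-Laplacian, or a direct distributional computation on half-spaces). A secondary but important point is to keep all constants explicit in $s$ so as to extract the stable prefactor $\mathcal{C}/(s(1-s))$: the $(1-s)$ factor reflects the standard Gagliardo normalization ensuring convergence to $\|\nabla u\|_{L^p}^p$ as $s\to 1$, while the $1/s$ factor absorbs the integration of the kernel against $d_K^{-sp}$ near $\partial K$. Provided both are tracked carefully through the Moser computation, the theorem follows.
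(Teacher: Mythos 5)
Your overall architecture — superharmonicity of $d_K^s$, a Moser-type test function $|u|^p/w^{p-1}$, and a discrete Picone inequality with a remainder — is essentially the paper's, and the schematic form of your pointwise inequality matches Lemma~\ref{lm:bacini}. But the final step of your plan contains a directional error that hides the real geometric ingredient. You propose to extract $\int_K |u|^p/d_K^{sp}$ from the integrated remainder $R$ by invoking the $s$-H\"older bound $|w(x)-w(y)|\le |x-y|^s$ and the $1$-Lipschitz bound on $d_K$; these are \emph{upper} bounds on $|d_K(x)^s-d_K(y)^s|$, hence on $R$, and go the wrong way. What you actually need is a \emph{lower} bound on the quantity
\[
\int_{\{y\, :\, d_K(y)\le d_K(x)\}} \frac{|d_K(x)-d_K(y)|^p}{|x-y|^{N+s\,p}}\,dy,
\]
showing it is at least $\tfrac{C_1}{1-s}\,d_K(x)^{p(1-s)}$. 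This is the fractional substitute for $|\nabla d_K|=1$, and it is a genuinely geometric fact about convex sets: it is proved via a cone opening toward a supporting hyperplane at the nearest boundary point (Proposition~\ref{prop:freddo}). Without it, the Moser computation does not close, because Lipschitz regularity of $d_K$ alone does not prevent $d_K(x)-d_K(y)$ from being much smaller than $|x-y|$ on large sets.

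Two further remarks. First, even with Proposition~\ref{prop:freddo}, the Moser argument yields a constant proportional to $s^p/(1-s)$, which \emph{vanishes} as $s\to 0$; your heuristic that ``the $1/s$ factor absorbs the integration of the kernel near $\partial K$'' does not materialize from this computation. The paper obtains the stated $1/(s(1-s))$ by combining the Moser bound with a separate elementary nonlocal argument \`a la Chen--Song, which produces a constant $\sim 1/s$ for $s$ small, and then interpolating the two. Second, your appeal to an ``infimum-of-supersolutions principle'' to pass from half-spaces to general convex $K$ is morally the right picture but not a theorem you can cite off the shelf for the weak formulation of $(-\Delta_p)^s$; the paper instead makes the comparison pointwise in $x$ with the tangent half-space at the nearest boundary point, using the monotonicity of $J_p$ and the principal-value computation for the half-space from \cite{IMS} (Lemma~\ref{lm:IMS}), which is both more elementary and more robust.
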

The constant obtained in \eqref{nostra} is very likely not sharp. However, a couple of comments are in order on this point.
\begin{oss}[Asymptotic behaviour in $s$ of the constant]
\label{oss:s}
We recall that if $u\in C^\infty_0(K)$, then we have  (see \cite[Corollary 1.3]{Po} and \cite[Proposition 2.8]{BPS})
\[
\lim_{s\nearrow 1} (1-s)\,\iint_{\mathbb{R}^N\times\mathbb{R}^N} \frac{|u(x)-u(y)|^p}{|x-y|^{N+s\,p}}\,dx\,dy=\alpha_{N,p}\,\int_{K} |\nabla u|^p\,dx,
\]
with
\[
\alpha_{N,p}=\frac{1}{p}\,\int_{\mathbb{S}^{N-1}}|\langle \omega,\mathbf{e}_1\rangle|^p\,d\mathcal{H}^{N-1}(\omega),\qquad \mathbf{e}_1=(1,0,\dots,0). 
\]
In this respect, we observe that the constant appearing in Theorem \ref{teo:torino} has the correct asymptotic behaviour as $s$ converges to $1$: by passing to the limit in \eqref{nostra} as $s$ goes to $1$, we obtain the usual local Hardy inequality
\begin{equation}
\label{passa}
\mathcal{C}\,\int_K \frac{|u|^p}{d_K^p}\,dx\le \alpha_{N,p}\,\int_K |\nabla u|^p\,dx.
\end{equation}
As for  the limit $s\searrow 0$, we recall that (see \cite[Theorem 3]{MS})
\[
\lim_{s\searrow 0} s\,\iint_{\mathbb{R}^N\times\mathbb{R}^N} \frac{|u(x)-u(y)|^p}{|x-y|^{N+s\,p}}\,dx\,dy=\beta_{N,p}\,\int_{K}|u|^p\,dx,
\]
with
\[
\beta_{N,p}=\frac{2\,N\,\omega_N}{p},
\]
and $\omega_N$ is the volume of the $N-$dimensional unit ball. Thus, in this case as well, our constant in \eqref{nostra} exhibits the correct asymptotic behaviour as $s$ converges to $0$. 
\end{oss}
\begin{oss}[Dependence on $N$ of $\mathcal{C}$]
At a first glance, it may look strange that the constant $\mathcal{C}$ in Theorem \ref{teo:torino} depends on $N$. Indeed, we have seen in \eqref{localp} that in the local case such an inequality holds with the universal constant
\[
\left(\frac{p-1}{p}\right)^p.
\]
However, it is easily seen that $\mathcal{C}$ {\it must} depend on $N$ in the fractional case. Indeed, we have already seen that passing to the limit in \eqref{nostra} as $s$ goes to $1$, we obtain the local Hardy inequality \eqref{passa}. As we already said, the sharp constant in the previous inequality is $((p-1)/p)^p$, which means that we must have
\[
\mathcal{C}\le \left(\frac{p-1}{p}\right)^p\,\alpha_{N,p},\qquad \mbox{ for every }N\ge 1.
\]
On the other hand, it is easily seen that $\alpha_{N,p}$ converges to $0$ as $N$ goes to $\infty$. This shows that $\mathcal{C}$ in \eqref{nostra} must depend on $N$.
\end{oss}

\subsection{Method of proof} We now spend some words on the proof of Theorem \ref{teo:torino}. We first observe that there is an elementary proof of the inequality
\[
C_{N,p,s}\,\int_K \frac{|u|^p}{d_K^{s\,p}}\,dx\le \iint_{\mathbb{R}^N\times\mathbb{R}^N} \frac{|u(x)-u(y)|^p}{|x-y|^{N+s\,p}}\,dx\,dy,
\]
as pointed out to us by Bart\l omiej Dyda.
This is based on geometric considerations and on the nonlocality of the double integral in the right-hand side. We detail this argument in Subsection \ref{sec:rotto} below. We point out that this method is purely nonlocal and does not have a local counterpart.
\par
Then it is not surprising that with this method we obtain a constant $C_{N,p,s}>0$ such that
\[
C_{N,p,s}\sim \frac{1}{s}\qquad \mbox{ as } s \searrow 0,
\]
while 
\[
C_{N,p,s}=o\left(\frac{1}{1-s}\right)\qquad \mbox{ as } s \nearrow 1.
\] 
As explained in Remark \ref{oss:s}, this means that the constant obtained in this way does not have the correct asymptotic dependence on $s$ as this goes to $1$. This suggests that this proof is not the correct one for $s\sim 1$. 
\par
For this reason, in order to obtain a constant behaving as $1/(1-s)$, 
we use a nonlocal variant of the Moser-type proof recalled at the beginning.
This is based on the fact that for every $0<s<1$ and $1<p<\infty$ we have in weak sense
\[
(-\Delta_p)^s d_K^s\ge 0 \qquad \mbox{ in }K,
\]
where $(-\Delta_p)^s$ is the {\it fractional $p-$Laplacian of order} $s$.
In other words, 
the function $d_K^s$ {\it is $(s,p)-$superharmonic} in the following sense (see Proposition \ref{prop:guido} below)
\[
\iint_{\mathbb{R}^N\times\mathbb{R}^N} \frac{|d_K(x)^s-d_K(y)^s|^{p-2}\,(d_K(x)^s-d_K(y)^s)\,\big(\varphi(x)-\varphi(y)\big)}{|x-y|^{N+s\,p}}\,dx\,dy\ge 0,
\]
for every nonnegative and smooth function $\varphi$, with compact support in $K$. Then we will test this inequality with $\varphi=|u|^p/d_K^{s\,(p-1)}$. 
\par
As in the local case, this trick is an essential feature in order to prove BMO regularity of the logarithm of positive supersolutions to the fractional $p-$Laplacian. This in turn is a crucial step in the proof of H\"older continuity of solutions to equations involving $(-\Delta_p)^s$. In this respect, this idea has already been exploited by Di Castro, Kuusi and Palatucci in \cite[Lemma 1.3]{DKP} (see also \cite[Lemma 3.4]{Ka} for the case $p=2$). However, we observe that the computations in \cite[Lemma 1.3]{DKP} do not lead to the desired Hardy inequality, due to a lack of symmetry in $x$ and $y$. For this, we need finer algebraic manipulations and a subtler pointwise inequality: these are contained in Lemma \ref{lm:bacini}, which is one of the main ingredients of the proof of Theorem \ref{teo:torino}. We refer to Remark \ref{oss:spiegone} below for a more detailed discussion on this point.
\vskip.2cm
The quest for fractional Hardy inequalities is certainly not new.  We list below some related contributions.
\begin{oss}[Comparison with known results] 
For the case of the whole space, the following fractional Hardy inequality
\[
C_{N,s,p}\,\int_{\mathbb{R}^N} \frac{|u|^p}{|x|^{s\,p}}\,dx\le \iint_{\mathbb{R}^N\times\mathbb{R}^N} \frac{|u(x)-u(y)|^p}{|x-y|^{N+s\,p}}\,dx\,dy,\qquad s\,p\not =N,
\]
has been proved by Maz'ya and Shaposhnikova in \cite[Theorem 2]{MS} and by Frank and Seiringer in \cite[Theorem 1.1]{FSspace}. In \cite{FSspace}, the sharp value of the constant $C_{N,s,p}$ is obtained.
We also refer to \cite[Theorem 1.4]{CF}, \cite[Theorem 1]{DV} and \cite[Theorem 6.1]{HKP} for some weighted versions of this inequality.
\par
As far as subsets $\Omega\subset\mathbb{R}^N$ are concerned, we would like to mention that in \cite[Theorem 1.1]{Dy} Dyda proved
\begin{equation}
\label{localizzata}
C_\Omega\,\int_\Omega \frac{|u|^p}{d_\Omega^{s\,p}}\,dx\le \iint_{\Omega\times \Omega} \frac{|u(x)-u(y)|^p}{|x-y|^{N+s\,p}}\,dx\,dy,\qquad \mbox{ for every }u\in C^\infty_0(\Omega),\end{equation}
under suitable assumptions on the open Lipschitz set $\Omega\subset \mathbb R^N$ and some restrictions on the product $s\,p$, see also \cite[Corollary 3]{DV}. 
\par
Observe that in the right-hand side of \eqref{localizzata}, the fractional Sobolev seminorm is now computed on $\Omega\times\Omega$, rather than on the whole $\mathbb{R}^N\times\mathbb{R}^N$.
However, as pointed out in \cite{Dy}, such a stronger inequality {\it fails to hold} for $s\,p\le 1$, whenever $\Omega$ is bounded. 
\par
On the other hand, when $\Omega$ is a half-space, inequality \eqref{localizzata} holds for $s\,p\not =1$. In this case, the sharp constant has been computed by Bogdan and Dyda in \cite[Theorem 1]{BD} for $p=2$ and by Frank and Seiringer in \cite[Theorem 1.1]{FS} for a general $1<p<\infty$.
We also mention that when $s\,p>1$ and $\Omega\subset\mathbb{R}^N$ is an open convex set, inequality \eqref{localizzata} with sharp constant (which is the same as in the half-space) has been proved by Loss and Sloane in \cite[Theorem 1.2]{LS}.
\par
We point out that our proof is different from those of the aforementioned results and our Hardy inequality \eqref{nostra} holds without any restriction on the product $s\,p$.
\end{oss}
%


\subsection{Plan of the paper}
We start with Section \ref{sec:2}, containing the main notations, definitions and some technical results. In this part, the main point is Proposition \ref{prop:freddo}. In Section \ref{sec:3} we show that, in a convex set $K$, the distance function $d_K$ raised to the power $s$ is $(s,p)-$superharmonic, see Proposition \ref{prop:guido}. 
The proof of Theorem \ref{teo:torino} is contained in Section \ref{sec:4}. Finally, in Section \ref{sec:consequence} we highlight some applications of our main result. 
The paper is complemented with an Appendix, containing some pointwise inequalities which are crucially exploited in the proof of our main result.

\begin{ack}
We wish to thank Guido De Philippis for suggesting us the elegant geometric argument in the proof of Proposition \ref{prop:guido}. We also thank Tuomo Kuusi for a discussion which clarified some points of his paper \cite{DKP}. Xavier Cabr\'e, Bart\l omiej Dyda and Rupert L. Frank made some useful comments on a preliminary version of the paper, we warmly thank them.
\par
E. Cinti is supported by the MINECO grant MTM2014-52402-C3-1-P, the ERC Advanced Grant 2013
n.   339958  {\it Complex  Patterns  for  Strongly  Interacting  Dynamical  Systems  -  COMPAT} and is part of the Catalan research group 2014 SGR 1083. 
\par
Both authors are members of the Gruppo Nazionale per l'Analisi Matematica, la Probabilit\`a
e le loro Applicazioni (GNAMPA) of the Istituto Nazionale di Alta Matematica (INdAM).
\end{ack}

\section{Preliminaries}
\label{sec:2}

\subsection{Notations}
For $x_0\in\mathbb{R}^N$ and $R>0$, we use the standard notation
\[
B_R(x_0)=\{x\in\mathbb{R}^N\, :\, |x-x_0|<R\}.
\]
For notational simplicity, for every $1<p<\infty$ we introduce the function $J_p:\mathbb{R}\to\mathbb{R}$ defined by
\[
J_p(t)=|t|^{p-2}\,t,\qquad \mbox{ for }t\in\mathbb{R}.
\]
For $\alpha>0$, we also set 
\[
L^{p-1}_{\alpha}(\mathbb{R}^N)=\left\{u\in L^{p-1}_{\rm loc}(\mathbb{R}^N)\, :\, \int_{\mathbb{R}^N} \frac{|u(x)|^{p-1}}{(1+|x|)^{N+\alpha}}\,dx<+\infty\right\}.
\]
If $\Omega\subset\mathbb{R}^N$ is an open set, for every $1<p<\infty$ and $0<s<1$, we define
\[
W^{s,p}(\Omega)=\{u\in L^p(\Omega)\, :\, [u]_{W^{s,p}(\Omega)}<+\infty\},
\]
where
\[
[u]_{W^{s,p}(\Omega)}=\left(\iint_{\Omega\times\Omega} \frac{|u(x)-u(y)|^p}{|x-y|^{N+s\,p}}\,dx\,dy\right)^\frac{1}{p}.
\]
The local version $W^{s,p}_{\rm loc}(\Omega)$ is defined in the usual way.
\subsection{Functional analytic facts}
We start with the following
\begin{defi}
\label{defi:superarmonica}
Let $1<p<\infty$ and $0<s<1$. Let $\Omega \subset \mathbb R^N$ be an open set. We say that $u\in W^{s,p}_{\rm loc}(\Omega)\cap L^{p-1}_{s\,p}(\mathbb{R}^N)$ is:
\begin{itemize}
\item  {\it locally weakly $(s,p)-$superharmonic} in $\Omega$ if 
\begin{equation}\label{super}
\iint_{\mathbb{R}^N\times\mathbb{R}^N} \frac{J_p(u(x)-u(y))\,\big(\varphi(x)-\varphi(y)\big)}{|x-y|^{N+s\,p}}\,dx\,dy\ge 0,
\end{equation}
for every nonnegative $\varphi\in W^{s,p}(\Omega)$ with compact support in $\Omega$; 
\vskip.2cm
\item  {\it locally weakly $(s,p)-$subharmonic} in $\Omega$ if $-u$ is $(s,p)-$superharmonic in $\Omega$;
\vskip.2cm
\item
{\it locally weakly $(s,p)-$harmonic} in $\Omega$ if it is both $(s,p)-$superharmonic\\ and $(s,p)-$subharmonic.
\end{itemize}
\end{defi}
We observe that thanks to the assumptions on $u$, the double integral in \eqref{super} is finite for every admissible test function.
\vskip.2cm
The following simple result is quite standard. We include the proof for completeness.
\begin{lm}
\label{lm:cagatella}
Let $\Omega\subset\mathbb{R}^N$ be a bounded measurable set. Then for every $u\in L^{p-1}_{\alpha}(\mathbb{R}^N)$ and $r>0$ we have
\[
\sup_{x\in\Omega} \int_{\mathbb{R}^N\setminus B_r(x)} \frac{|u(y)|^{p-1}}{|x-y|^{N+\alpha}}\,dy<+\infty.
\]
\end{lm}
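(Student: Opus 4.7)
The strategy is to split the integral into a local part, where we exploit $u\in L^{p-1}_{\rm loc}(\mathbb{R}^N)$ and the lower bound $|x-y|\ge r$, and a tail part, where we can compare the weight $|x-y|^{-(N+\alpha)}$ with $(1+|y|)^{-(N+\alpha)}$ and invoke the membership of $u$ in $L^{p-1}_\alpha(\mathbb{R}^N)$.

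Since $\Omega$ is bounded, I would first fix $R\ge 1$ such that $\Omega\subset B_R(0)$; this $R$ depends only on $\Omega$, so every estimate below will be uniform in $x\in\Omega$. For $x\in\Omega$, I split
\[
\int_{\mathbb{R}^N\setminus B_r(x)} \frac{|u(y)|^{p-1}}{|x-y|^{N+\alpha}}\,dy = \mathcal{I}_1(x)+\mathcal{I}_2(x),
\]
where $\mathcal{I}_1(x)$ is the integral over $(\mathbb{R}^N\setminus B_r(x))\cap B_{2R}(0)$ and $\mathcal{I}_2(x)$ is the one over $\mathbb{R}^N\setminus(B_r(x)\cup B_{2R}(0))$.

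For $\mathcal{I}_1(x)$ the bound $|x-y|\ge r$ on the domain of integration yields
\[
\mathcal{I}_1(x)\le \frac{1}{r^{N+\alpha}}\int_{B_{2R}(0)} |u(y)|^{p-1}\,dy,
\]
and the last integral is finite and independent of $x$, because $u\in L^{p-1}_{\rm loc}(\mathbb{R}^N)$. For $\mathcal{I}_2(x)$ the key observation is that, for $|y|\ge 2R$ and $x\in\Omega\subset B_R(0)$, the triangle inequality gives
\[
|x-y|\ge |y|-R\ge \frac{|y|}{2}\ge \frac{1+|y|}{4},
\]
where the last inequality uses $|y|\ge 2R\ge 2$. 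Therefore
\[
\mathcal{I}_2(x)\le 4^{N+\alpha}\int_{\mathbb{R}^N} \frac{|u(y)|^{p-1}}{(1+|y|)^{N+\alpha}}\,dy,
\]
which is finite by the assumption $u\in L^{p-1}_\alpha(\mathbb{R}^N)$, and again independent of $x$.

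Combining the two bounds yields a uniform upper estimate over $x\in\Omega$, which is precisely the claim. I do not foresee any genuine obstacle here: the argument is a straightforward comparison between the weights $|x-y|^{-(N+\alpha)}$ and $(1+|y|)^{-(N+\alpha)}$, the only point requiring a bit of care is fixing $R\ge 1$ from the outset so that the elementary estimate $|y|\ge (1+|y|)/2$ can be used on the tail.
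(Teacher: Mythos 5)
Your proof is correct. It differs from the paper's in one small but pleasant way: you split the exterior of $B_r(x)$ into a near part $(\mathbb{R}^N\setminus B_r(x))\cap B_{2R}(0)$, handled with $u\in L^{p-1}_{\rm loc}$, and a far part, handled with the tail weight; the paper instead avoids any decomposition by proving a \emph{single} pointwise bound valid on the whole exterior, namely
\[
\frac{|x-y|}{1+|y|}\ge\frac{|x-y|}{1+|x-y|+|x|}=\frac{1}{1+\dfrac{1+|x|}{|x-y|}}\ge\frac{r}{r+1+|x|}
\qquad\text{for all }y\notin B_r(x),
\]
so that the whole integral is dominated by $\bigl(\tfrac{r+1+|x|}{r}\bigr)^{N+\alpha}\int_{\mathbb{R}^N}\frac{|u|^{p-1}}{(1+|y|)^{N+\alpha}}\,dy$ in one stroke, uniformly for $x$ in the bounded set $\Omega$. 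The two arguments use the same basic comparison of weights and give constants of the same quality; the paper's version is slightly shorter because it never needs to separately invoke $L^{p-1}_{\rm loc}$ (which is, in any case, a consequence of $u\in L^{p-1}_\alpha(\mathbb{R}^N)$). Your route is perhaps a bit more robust — it is the one that generalizes when the weight does not satisfy such a clean two-sided comparison — but both are complete and correct here.
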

\begin{proof}
We observe that for every $x\in\Omega$ and $y\not\in B_r(x)$ we have
\[
\frac{|x-y|}{1+|y|}\ge \frac{|x-y|}{1+|x-y|+|x|}=\frac{1}{1+\dfrac{1+|x|}{|x-y|}}\ge \frac{r}{r+1+|x|}.
\]
This gives immediately
\[
\int_{\mathbb{R}^N\setminus B_r(x)} \frac{|u(y)|^{p-1}}{|x-y|^{N+s\,p}}\,dy\le \left(\frac{r+1+|x|}{r}\right)^{N+s\,p}\,\int_{\mathbb{R}^N} \frac{|u(y)|^{p-1}}{(1+|y|)^{N+s\,p}}\,dy.
\]
Since $\Omega$ is bounded, we get the desired conclusion.
\end{proof}
The following technical result will be used in the next section.
\begin{lm}
\label{lm:madonnabellina}
Let $1<p<\infty$ and $0<s<1$. Let $\Omega\subset \mathbb{R}^N$ be an open bounded set. Given $u\in W^{s,p}_{\rm loc}(\Omega)\cap L^{p-1}_{s\,p}(\mathbb{R}^N)$, $\varphi \in L^p(\mathbb{R}^N)$ with compact support in $\Omega$ and  $\varepsilon>0$, the function
\[
(x,y)\mapsto \frac{J_p(u(x)-u(y))}{|x-y|^{N+s\,p}}\,\varphi(x),
\]
is summable on $\mathcal{T}_\varepsilon:=\{(x,y)\in\mathbb{R}^N\times\mathbb{R}^N\, :\, |x-y|\ge \varepsilon\}$.
\end{lm}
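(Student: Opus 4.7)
Let me denote by $K\subset \Omega$ the compact support of $\varphi$ and pick an auxiliary bounded open set $\Omega'$ with $K\subset \Omega'$ and $\overline{\Omega'}\subset \Omega$, so that
\[
\eta:=\mathrm{dist}(K,\mathbb{R}^N\setminus \Omega')>0.
\]
Since $\varphi\equiv 0$ outside $K$, and using the elementary inequality $|a-b|^{p-1}\le C_p\,(|a|^{p-1}+|b|^{p-1})$, my plan is to split the integral over $\mathcal{T}_\varepsilon$ according to whether $y\in \Omega'$ or $y\in \mathbb{R}^N\setminus \Omega'$, and to treat the two regions with two different integrability ingredients: $u\in L^p(\Omega')$ (a consequence of $u\in W^{s,p}_{\mathrm{loc}}(\Omega)$) on the ``near'' part, and the tail decay $u\in L^{p-1}_{s\,p}(\mathbb{R}^N)$, via Lemma \ref{lm:cagatella}, on the ``far'' part.

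On the near region $K\times (\Omega'\setminus B_\varepsilon(x))$ the kernel is trivially bounded by $\varepsilon^{-N-s\,p}$, so everything reduces to showing that
\[
\int_{K} |\varphi(x)|\,|u(x)|^{p-1}\,dx\quad \text{and}\quad \int_{K}|\varphi(x)|\,dx\,\int_{\Omega'}|u(y)|^{p-1}\,dy
\]
are both finite. The second is clear, since $u\in L^p(\Omega')\hookrightarrow L^{p-1}(\Omega')$ (bounded domain) and $\varphi\in L^p$ has compact support. The first follows from H\"older's inequality with exponents $p$ and $p/(p-1)$, bounding it by $\|\varphi\|_{L^p(K)}\,\|u\|^{p-1}_{L^p(K)}$.

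On the far region $K\times (\mathbb{R}^N\setminus \Omega')$ we have the deterministic lower bound $|x-y|\ge \eta$. Splitting again $|u(x)-u(y)|^{p-1}\le C_p\,(|u(x)|^{p-1}+|u(y)|^{p-1})$, the $|u(x)|^{p-1}$ contribution is dominated by
\[
C_p\,\Bigl(\int_{\mathbb{R}^N\setminus B_\eta(0)}\frac{dz}{|z|^{N+s\,p}}\Bigr)\,\int_K |\varphi(x)|\,|u(x)|^{p-1}\,dx,
\]
which was already shown to be finite. The $|u(y)|^{p-1}$ contribution is bounded by
\[
C_p\,\|\varphi\|_{L^1(K)}\,\sup_{x\in K}\int_{\mathbb{R}^N\setminus B_\eta(x)}\frac{|u(y)|^{p-1}}{|x-y|^{N+s\,p}}\,dy,
\]
and the supremum on the right is finite by Lemma \ref{lm:cagatella} (applied with $\alpha=s\,p$ on the bounded set $K$), since $u\in L^{p-1}_{s\,p}(\mathbb{R}^N)$.

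There is no real obstacle here: the statement is essentially bookkeeping, and the only subtlety is recognizing that the two hypotheses on $u$ play complementary roles, one controlling the behaviour on a compact neighbourhood of $\mathrm{supp}\,\varphi$ and the other controlling the tail at infinity, with $\varepsilon$ and $\eta$ ensuring a uniform lower bound on $|x-y|$ in each piece.
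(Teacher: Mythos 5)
Your proof is correct, and it takes a mildly different route from the paper's. The paper works directly with $\mathcal{O}=\mathrm{supp}\,\varphi$ and splits $\mathcal{O}\times\mathbb{R}^N$ into $\mathcal{O}\times\mathcal{O}$ and $\mathcal{O}\times(\mathbb{R}^N\setminus\mathcal{O})$; since $x$ and $y$ can then be arbitrarily close on the far piece as well, the lower bound $|x-y|\ge\varepsilon$ has to be used on both pieces. You instead interpose an open set $\Omega'$ with $\mathrm{supp}\,\varphi\Subset\Omega'\Subset\Omega$, so the near/far split is geometric: on $K\times(\mathbb{R}^N\setminus\Omega')$ you get the uniform lower bound $|x-y|\ge\eta$ for free, independently of $\varepsilon$. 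The other genuine difference is on the near region: the paper factors $|x-y|^{-(N+sp)}=|x-y|^{-(N+sp)/p'}\cdot|x-y|^{-(N+sp)/p}$, estimates the second factor by $\varepsilon^{-(N+sp)/p}$, and applies H\"older against the seminorm $[u]_{W^{s,p}(\mathcal{O})}$, so the hypothesis $u\in W^{s,p}_{\rm loc}$ is used in full. You instead bound the kernel crudely by $\varepsilon^{-N-sp}$, split $|u(x)-u(y)|^{p-1}\le C_p(|u(x)|^{p-1}+|u(y)|^{p-1})$, and only invoke $u\in L^p_{\rm loc}(\Omega)$. This is more elementary and makes transparent that the fractional seminorm plays no role in the off-diagonal summability; only local $p$-integrability and the tail condition $u\in L^{p-1}_{sp}(\mathbb{R}^N)$ (via Lemma \ref{lm:cagatella}, exactly as you apply it) are needed once the diagonal strip $\{|x-y|<\varepsilon\}$ has been excised. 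On the far region both arguments coincide in substance. So: same skeleton and same key auxiliary result, but your near-region estimate is more economical in its hypotheses.
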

\begin{proof}
Let us call $\mathcal{O}$ the support of $\varphi$. We have
\[
\begin{split}
\iint_{\mathcal{T}_\varepsilon} &\left|\frac{J_p(u(x)-u(y))}{|x-y|^{N+s\,p}}\,\varphi(x)\right|\,dx\,dy\\&=\iint_{\{(x,y)\in\mathcal{O}\times\mathbb{R}^N\, :\, |x-y|\ge \varepsilon\}}\frac{|u(x)-u(y)|^{p-1}}{|x-y|^{N+s\,p}}\,|\varphi(x)|\,dx\,dy\\
&\le \varepsilon^{-\frac{N}{p}-s}\,\iint_{\{(x,y)\in\mathcal{O}\times\mathcal{O}\, :\, |x-y|\ge \varepsilon\}}\frac{|u(x)-u(y)|^{p-1}}{|x-y|^\frac{N+s\,p}{p'}}\,|\varphi(x)|\,dx\,dy\\
&+C\,\iint_{\{(x,y)\in\mathcal{O}\times(\mathbb{R}^N\setminus\mathcal{O})\, :\, |x-y|\ge \varepsilon\}}\frac{|u(x)|^{p-1}+|u(y)|^{p-1}}{|x-y|^{N+s\,p}}\,|\varphi(x)|\,dx\,dy\\
&\le \varepsilon^{-\frac{N}{p}-s}\,|\mathcal{O}|^\frac{1}{p}\,\left(\iint_{\mathcal{O}\times\mathcal{O}}\frac{|u(x)-u(y)|^p}{|x-y|^{N+s\,p}}\,dx\,dy\right)^\frac{1}{p'}\,\left(\int_{\mathcal{O}} |\varphi|^p\,dx\right)^\frac{1}{p}\\
&+C\,\iint_{\{(x,y)\in\mathcal{O}\times(\mathbb{R}^N\setminus\mathcal{O})\, :\, |x-y|\ge \varepsilon\}}\frac{|u(x)|^{p-1}+|u(y)|^{p-1}}{|x-y|^{N+s\,p}}\,|\varphi(x)|\,dx\,dy.
\end{split}
\]
In order to treat the last integral, we observe that
\[
\{(x,y)\in\mathcal{O}\times(\mathbb{R}^N\setminus\mathcal{O})\, :\, |x-y|\ge \varepsilon\}\subset \{(x,y)\in\mathcal{O}\times \mathbb{R}^N\, :\, |x-y|\ge \varepsilon\}.
\]
Thus we obtain
\[
\begin{split}
\iint_{\{(x,y)\in\mathcal{O}\times(\mathbb{R}^N\setminus\mathcal{O})\, :\, |x-y|\ge \varepsilon\}}&\frac{|u(x)|^{p-1}+|u(y)|^{p-1}}{|x-y|^{N+s\,p}}\,|\varphi(x)|\,dx\,dy\\
&\le \int_{\mathcal{O}}\left(\int_{\mathbb{R}^N\setminus B_\varepsilon(x)} \frac{|u(x)|^{p-1}\,|\varphi(x)|}{|x-y|^{N+s\,p}}\,dy\right)\,dx\\
&+\int_{\mathcal{O}}\left(\int_{\mathbb{R}^N\setminus B_\varepsilon(x)} \frac{|u(y)|^{p-1}\,|\varphi(x)|}{|x-y|^{N+s\,p}}\,dy\right)\,dx\\
&\le \frac{N\,\omega_N}{s\,p}\,\varepsilon^{-s\,p}\,\left(\int_{\mathcal{O}} |u|^{p}\,dx\right)^\frac{1}{p'}\,\left(\int_\mathcal{O}|\varphi|^p\,dx\right)^\frac{1}{p}\\
&+\int_\mathcal{O} \left(\int_{\mathbb{R}^N\setminus B_\varepsilon(x)} \frac{|u(y)|^{p-1}}{|x-y|^{N+s\,p}}\,dy\right)\,|\varphi(x)|\,dx.
\end{split}
\]
We conclude by observing that
\[
\sup_{x\in\mathcal{O}} \int_{\mathbb{R}^N\setminus B_\varepsilon(x)} \frac{|u(y)|^{p-1}}{|x-y|^{N+s\,p}}\,dy<+\infty,
\]
thanks to the fact that $u\in L^{p-1}_{s\,p}(\mathbb{R}^N)$, see Lemma \ref{lm:cagatella}.
\end{proof}
In order to use a Moser--type argument for the proof of Theorem \ref{teo:torino}, we will need the following result to guarantee that a certain test function is admissible.
\begin{lm}
\label{lm:minchiata}
Let $1<p<\infty$ and $0<s<1$. Let $\Omega\subset\mathbb{R}^N$ be an open bounded set. For every $u\in W^{s,p}(\Omega)\cap L^\infty(\Omega)$ with compact support in $\Omega$ and $v\in W^{s,p}_{\rm loc}(\Omega)\cap L^\infty(\Omega)$, we have 
\[
u\,v\in W^{s,p}(\Omega).
\]
\end{lm}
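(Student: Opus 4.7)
The plan is straightforward: decompose the increment of $uv$ into a piece where the regularity comes from $u$ and a piece where it comes from $v$, then exploit the compact support of $u$ to confine the $v$-piece to a compact subset of $\Omega$ where $W^{s,p}_{\rm loc}$-regularity applies.

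First I would verify $uv\in L^p(\Omega)$, which is immediate: since $u,v\in L^\infty(\Omega)$ and $\Omega$ is bounded, $\|uv\|_{L^p(\Omega)}\le \|u\|_{L^\infty}\|v\|_{L^\infty}|\Omega|^{1/p}<\infty$. The real content is the finiteness of $[uv]_{W^{s,p}(\Omega)}$. For that I would use the elementary pointwise bound
\[
|u(x)\,v(x)-u(y)\,v(y)|\le |v(y)|\,|u(x)-u(y)|+|u(x)|\,|v(x)-v(y)|,
\]
raise to the $p$-th power using $(a+b)^p\le 2^{p-1}(a^p+b^p)$, divide by $|x-y|^{N+sp}$ and integrate over $\Omega\times\Omega$. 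This produces two terms to control.

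The first term is bounded by $2^{p-1}\|v\|_{L^\infty(\Omega)}^p\,[u]_{W^{s,p}(\Omega)}^p$, which is finite by hypothesis. For the second term, let $K_0\subset\Omega$ denote the (compact) support of $u$. Since $u(x)=0$ for $x\notin K_0$, the second integral reduces to
\[
2^{p-1}\|u\|_{L^\infty(\Omega)}^p\iint_{K_0\times\Omega}\frac{|v(x)-v(y)|^p}{|x-y|^{N+sp}}\,dx\,dy.
\]
I would then fix an intermediate open set $K_1$ with $K_0\Subset K_1\Subset\Omega$ and split the $y$-integration into $K_1$ and $\Omega\setminus K_1$. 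On $K_0\times K_1$ the integral is controlled by $[v]_{W^{s,p}(K_1)}^p$, which is finite because $v\in W^{s,p}_{\rm loc}(\Omega)$. On $K_0\times(\Omega\setminus K_1)$ the key geometric fact is $|x-y|\ge \mathrm{dist}(K_0,\Omega\setminus K_1)=:d_0>0$, so that
\[
\iint_{K_0\times(\Omega\setminus K_1)}\frac{|v(x)-v(y)|^p}{|x-y|^{N+sp}}\,dx\,dy\le d_0^{-(N+sp)}\,(2\|v\|_{L^\infty(\Omega)})^p\,|K_0|\,|\Omega|,
\]
which is finite since $\Omega$ is bounded.

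No step here is subtle; the only mild obstacle is the placement of the singularity $1/|x-y|^{N+sp}$ in the term involving $v$, and this is precisely what the compact support of $u$ together with a standard intermediate-set trick resolves. Combining the three estimates yields a finite upper bound for $[uv]_{W^{s,p}(\Omega)}^p$, completing the proof that $uv\in W^{s,p}(\Omega)$.
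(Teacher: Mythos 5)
Your proof is correct and follows essentially the same route as the paper: the same pointwise decomposition of the increment of $uv$, the first term absorbed by $\|v\|_{L^\infty}^p[u]_{W^{s,p}(\Omega)}^p$, and the second term handled by choosing an intermediate set $K_0\Subset K_1\Subset\Omega$ to split into a near part controlled by $[v]_{W^{s,p}(K_1)}$ and a far part where the kernel is bounded away from the singularity. The only cosmetic difference is which factor ($v(x)$ vs.\ $v(y)$, $u(x)$ vs.\ $u(y)$) you keep in each term; the estimates are identical.
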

\begin{proof}
We start by observing that with simple manipulations we have
\[
\begin{split}
[u\,v]_{W^{s,p}(\Omega)}^p&\le 2^{p-1}\,\iint_{\Omega\times\Omega} \frac{|u(x)-u(y)|^p\,|v(x)|^p}{|x-y|^{N+s\,p}}\,dx\,dy\\
&+2^{p-1}\,\iint_{\Omega\times\Omega} \frac{|v(x)-v(y)|^p\,|u(y)|^p}{|x-y|^{N+s\,p}}\,dx\,dy\\
&\le 2^{p-1}\,\|v\|_{L^\infty(\Omega)}^p\,[u]^p_{W^{s,p}(\Omega)}+2^{p-1}\,\iint_{\Omega\times\Omega} \frac{|v(x)-v(y)|^p\,|u(y)|^p}{|x-y|^{N+s\,p}}\,dx\,dy.
\end{split}
\]
In order to estimate the last integral, we set $\mathcal{O}=\mathrm{supp}(u)$ and then take $\mathcal{O'}$ such that $\mathcal{O}\Subset\mathcal{O}'\Subset\Omega$. We then obtain
\[
\begin{split}
\iint_{\Omega\times\Omega} \frac{|v(x)-v(y)|^p\,|u(y)|^p}{|x-y|^{N+s\,p}}\,dx\,dy&=\iint_{\mathcal{O'}\times \mathcal{O'}} \frac{|v(x)-v(y)|^p\,|u(y)|^p}{|x-y|^{N+s\,p}}\,dx\,dy\\
&+\iint_{(\Omega\setminus \mathcal{O'})\times \mathcal{O}} \frac{|v(x)-v(y)|^p\,|u(y)|^p}{|x-y|^{N+s\,p}}\,dx\,dy\\
&\le \|u\|_{L^\infty(\Omega)}^p\,[v]^p_{W^{s,p}(\mathcal{O}')}\\
&+\frac{2^{p}\,|\Omega|\,|\mathcal{O}|}{\mathrm{dist}(\mathcal{O},\Omega\setminus\mathcal{O}')^{N+s\,p}}\,\|u\|_{L^\infty(\Omega)}^p\,\|v\|^p_{L^\infty(\Omega)}.
\end{split}
\]
This gives the desired conclusion.
\end{proof}

\subsection{An expedient estimate for convex sets}

The following expedient result is a sort of fractional counterpart of the identity 
\[
|\nabla d_K|=1\qquad \mbox{ almost everywhere in }K.
\]
As explained in the Introduction, in the local case this is an essential ingredient in the proof of the Hardy inequality for convex sets. This will play an important role in our case as well.
\begin{prop}
\label{prop:freddo}
Let $1<p<\infty$ and $0<s<1$. Let $K\subset\mathbb{R}^N$ be an open bounded convex set. Then we have
\[
\int_{\{y\in K\, :\, d_K(y)\le d_K(x)\}} \frac{|d_K(x)-d_K(y)|^p}{|x-y|^{N+s\,p}}\,dy\ge \frac{C_1}{1-s}\,d_K(x)^{p\,(1-s)},\quad \mbox{ for a.\,e. }x\in K,
\]
where $C_1=C_1(N,p)>0$ is the constant
\[
C_1=\frac{1}{p}\,\sup_{0<\sigma<1}\Big[\sigma^p\,\mathcal{H}^{N-1}\Big(\{\omega\in\mathbb{S}^{N-1}\, :\, \langle \omega,\mathbf{e}_1\rangle>\sigma\}\Big)\Big].
\]
\end{prop}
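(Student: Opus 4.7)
The plan is as follows. Fix $x\in K$ and choose a closest boundary point $\bar y\in\partial K$, so that $|x-\bar y|=d_K(x)$; set $\omega_0=(x-\bar y)/d_K(x)\in\mathbb{S}^{N-1}$. The key geometric input is that the hyperplane through $\bar y$ orthogonal to $\omega_0$ supports $K$, namely
$$K\subset H:=\{z\in\mathbb{R}^N:\langle z-\bar y,\omega_0\rangle\ge 0\}.$$
I would prove this by invoking the supporting hyperplane theorem for $\overline K$ at $\bar y\in\partial K$, which produces some inward unit normal $\nu$. Since $\overline{B_{d_K(x)}(x)}\subset \overline K$, every point of the closed ball lies in $\{z:\langle z-\bar y,\nu\rangle\ge 0\}$; testing on $z=x-d_K(x)\,\nu$ gives $\langle x-\bar y,\nu\rangle\ge d_K(x)$, while Cauchy--Schwarz yields the opposite inequality. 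The equality case forces $\nu=\omega_0$, so $H$ is supporting.

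From $K\subset H$ and the basic monotonicity $A\supset B\Rightarrow d(\cdot,A)\le d(\cdot,B)$ applied with $A=K^c\supset H^c=B$, I obtain the affine majorant
$$d_K(y)\le d_H(y)=\langle y-\bar y,\omega_0\rangle=d_K(x)+\langle y-x,\omega_0\rangle\qquad\text{for every }y\in K,$$
which plays the role of $|\nabla d_K|=1$ from the local proof. Fix $\sigma\in(0,1)$ and parametrize $y=x-t\omega$ with $t>0$ and $\omega\in\mathbb{S}^{N-1}$. For $0<t<d_K(x)$ the point $y$ lies in $B_{d_K(x)}(x)\subset K$, so $y\in K$, and if in addition $\langle\omega,\omega_0\rangle>\sigma$, the affine bound gives $d_K(y)\le d_K(x)-t\sigma<d_K(x)$. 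Hence $y$ belongs to the integration region, and $d_K(x)-d_K(y)\ge t\sigma$.

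Passing to polar coordinates centered at $x$ and discarding everything outside this cone-ball region,
$$\int_{\{y\in K:\,d_K(y)\le d_K(x)\}}\frac{|d_K(x)-d_K(y)|^p}{|x-y|^{N+s\,p}}\,dy\ \ge\ \sigma^p\,\mathcal{H}^{N-1}(A_\sigma)\int_0^{d_K(x)}t^{\,p(1-s)-1}\,dt,$$
where $A_\sigma=\{\omega\in\mathbb{S}^{N-1}:\langle\omega,\omega_0\rangle>\sigma\}$. Since $p(1-s)>0$ the radial integral equals $d_K(x)^{p(1-s)}/[p(1-s)]$, and by rotational invariance $\mathcal{H}^{N-1}(A_\sigma)=\mathcal{H}^{N-1}(\{\omega:\langle\omega,\mathbf{e}_1\rangle>\sigma\})$, independent of $x$ and $K$. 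Optimizing over $\sigma\in(0,1)$ produces precisely the constant $C_1$ in the statement. The main obstacle is the supporting-hyperplane identification in the first paragraph; once that is in hand, the rest is a direct polar-coordinate computation.
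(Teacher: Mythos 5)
Your proposal is correct and follows essentially the same route as the paper: choose a closest boundary point, use the supporting hyperplane there to bound $d_K$ from above by an affine function, and integrate in polar coordinates over the cone $\{\langle\omega,\omega_0\rangle>\sigma\}$ intersected with $B_{d_K(x)}(x)$. The only (welcome) addition is that you supply the short argument identifying the inward normal at $\bar y$ with $\omega_0$, a point the paper asserts without proof.
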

\begin{proof}
We set for simplicity $\delta=d_K(x)$, thus $B_\delta(x)\subset K$ and we have
\[
\begin{split}
\int_{\{y\in K\, :\, d_K(y)\le d_K(x)\}} \frac{|d_K(x)-d_K(y)|^p}{|x-y|^{N+s\,p}}\,dy
&\ge \int_{\{y\in B_\delta(x)\, :\, d_K(y)\le d_K(x)\}}  \frac{|d_K(x)-d_K(y)|^p}{|x-y|^{N+s\,p}}\,dy.
\end{split}
\]
We now take $x'\in\partial K$ such that $|x-x'|=\delta$. For a given $0<\sigma<1$, we consider the portion $\Sigma_\sigma(x)$ of $B_\delta(x)$ defined by
\[
\Sigma_\sigma(x)=\left\{y\in B_{\delta}(x)\, :\,\left \langle \frac{y-x}{|y-x|},\frac{x'-x}{|x'-x|}\right\rangle>\sigma \right\},
\]
see Figure \ref{fig:oring}. \begin{figure}[h]
\includegraphics[scale=.35]{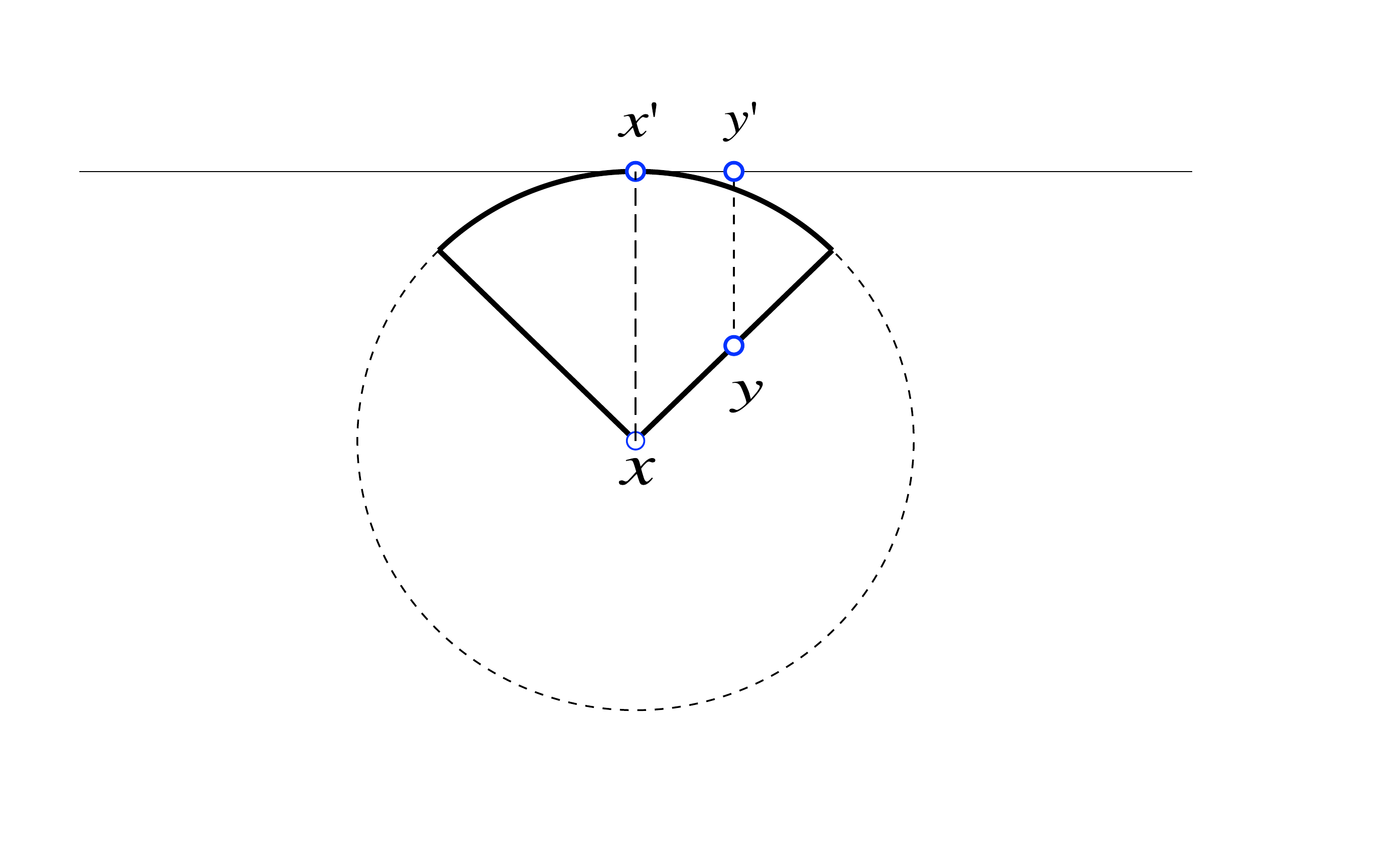}
\caption{The set $\Sigma_\sigma(x)$ and the supporting hyperplane $\Pi_{x'}$.}
\label{fig:oring}
\end{figure}
\par
By convexity of $K$, it is not difficult to see that 
\begin{equation}
\label{inclusione}
\Sigma_\sigma(x)\subset \{y\in B_\delta(x)\, :\, d_K(y)\le d_K(x)\},\qquad \mbox{ for every }0<\sigma<1.
\end{equation}
We can be more precise on this point. We denote by $\Pi_{x'}$ the supporting hyperplane of $K$ at the point $x'$, orthogonal to $x'-x$.
Then for every $y\in K$, we denote by $y'$ the orthogonal projection of $y$ on $\Pi_{x'}$.
Thus by convexity we have
\[
d_K(y)\le |y-y'|,\qquad \mbox{ for every }y\in K.
\]
We then observe that for every $y\in \Sigma_\sigma(x)$, it holds
\begin{equation}
\label{madonnina}
\begin{split}
d_K(x)=|x-x'|&=|y-y'|+\left\langle \frac{y-x}{|y-x|},\frac{x'-x}{|x'-x|}\right\rangle\,|y-x|\\
&\ge d_K(y)+\sigma\,|y-x|.
\end{split}
\end{equation}
By using \eqref{inclusione} and \eqref{madonnina}, we thus obtain
\[
\begin{split}
\int_{\{y\in K\, :\, d_K(y)\le d_K(x)\}} & \frac{|d_K(x)-d_K(y)|^p}{|x-y|^{N+s\,p}}\,dy\\
&\ge \int_{\Sigma_\sigma(x)}  \frac{|d_K(x)-d_K(y)|^p}{|x-y|^{N+s\,p}}\,dy\\
&\ge \sigma^p\,\int_{\Sigma_\sigma(x)} |x-y|^{p\,(1-s)-N}\,dy\\
&=\sigma^p\,\left(\int_{\left\{\omega\in\mathbb{S}^{N-1}\, :\, \left\langle \omega,\frac{x'-x}{|x'-x|}\right\rangle>\sigma\right\}}\,d\mathcal{H}^{N-1}(\omega)\right)\,\int_0^{\delta} \varrho^{-1+p\,(1-s)}\,d\varrho\\
&=\frac{f(\sigma)\,\sigma^p}{(1-s)\,p}\,\delta^{p\,(1-s)},
\end{split}
\]
where $f(\sigma)>0$ is the quantity
\[
f(\sigma)=\int_{\left\{\omega\in\mathbb{S}^{N-1}\, :\, \left\langle \omega,\mathbf{e}_1\right\rangle>\sigma\right\}}\,d\mathcal{H}^{N-1}(\omega).
\]
By arbitrariness of $0<\sigma<1$, we can take the supremum and get the conclusion.
\end{proof}

\section{Superharmonicity of the distance function}
\label{sec:3}

In this section we will prove that $d_K^s$ in a convex set $K$ is  weakly $(s,p)-$superharmonic, see Definition \ref{defi:superarmonica}.
We start with the case of the half-space. The proof of the following fact can be found in \cite[Lemma 3.2]{IMS} (see also \cite[Theorem 3.4.1]{BV} for the case $p=2$).
\begin{lm}
\label{lm:IMS}
We set  $\mathbb{H}^N_+=\{x\in\mathbb{R}^N\, :\, x_N>0\}$.
Let $1<p<\infty$ and $0<s<1$, then $d_{\mathbb{H}^N_+}^s$ is locally weakly $(s,p)-$harmonic in $\mathbb{H}^N_+$.
Moreover, there holds 
\[
\lim_{\varepsilon\to 0} \int_{\mathbb{R}^N\setminus B_\varepsilon(x)} \frac{J_p(d_{\mathbb{H}^N_+}(x)^s-d_{\mathbb{H}^N_+}(y)^s)}{|x-y|^{N+s\,p}}\,dy=0,
\]
strongly in $L^1_{\rm loc}(\mathbb{H}^N_+)$.
\end{lm}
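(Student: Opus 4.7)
My plan is to establish pointwise the identity
\[
\mathrm{P.V.}\int_{\mathbb{R}^N} \frac{J_p(d_{\mathbb{H}^N_+}(x)^s - d_{\mathbb{H}^N_+}(y)^s)}{|x-y|^{N+sp}}\,dy = 0, \qquad x\in\mathbb{H}^N_+,
\]
and then upgrade it to strong $L^1_{\rm loc}$ convergence of the $\varepsilon$-truncated integrals, from which weak $(s,p)$-harmonicity follows by symmetrization. The reduction to one dimension is the first key step. Since $d_{\mathbb{H}^N_+}(x)^s$ depends only on $x_N$, writing $y = (y',y_N)$ and integrating the transversal variable $y' \in \mathbb{R}^{N-1}$ first gives, for every $y_N\neq x_N$,
\[
\int_{\mathbb{R}^{N-1}}\frac{dy'}{(|x'-y'|^2 + (x_N - y_N)^2)^{(N+sp)/2}} = c_N\,|x_N - y_N|^{-(1+sp)},
\]
with $c_N = \int_{\mathbb{R}^{N-1}}(1+|z|^2)^{-(N+sp)/2}\,dz$. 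Comparing the spherical truncation $\{|x-y|\ge\varepsilon\}$ with the cylindrical one $\{|x_N - y_N|\ge\varepsilon\}$, and controlling the annular difference as $o(1)$ as $\varepsilon\to 0$, the $N$-dimensional identity reduces to the one-dimensional pointwise identity
\[
\mathrm{P.V.}\int_{\mathbb{R}}\frac{J_p(x_N^s - (t)_+^s)}{|x_N - t|^{1+sp}}\,dt = 0, \qquad x_N > 0.
\]

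For this one-dimensional pointwise identity, the scaling $t = x_N r$ makes the integral equal to $x_N^{-s}$ times its value at $x_N = 1$, so it suffices to prove that $I := \mathrm{P.V.}\int_{\mathbb{R}}\frac{J_p(1 - t_+^s)}{|1-t|^{1+sp}}\,dt$ vanishes. The contribution of $(-\infty, 0)$ is $1/(sp)$; the substitution $t\mapsto 1/t$ transforms the contribution of $(1,\infty)$ into $-\int_0^1(1-u^s)^{p-1}u^{s-1}(1-u)^{-(1+sp)}\,du$; combining with the contribution of $(0,1)$ gives
\[
I = \frac{1}{sp} + \int_0^1 \frac{(1-t^s)^{p-1}(1 - t^{s-1})}{(1-t)^{1+sp}}\,dt,
\]
and it remains to verify that the last integral equals $-1/(sp)$. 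This is the explicit computation of the fractional $p$-Laplacian of $t_+^s$ on the positive half-line, reducible via a further change of variable (or directly to an Euler beta integral) to a known identity.

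The local $L^1$ statement then follows from dominated convergence: for $x$ in a compact $K\Subset\mathbb{H}^N_+$, the $\varepsilon$-truncated integrand is dominated uniformly in $\varepsilon$ by an $L^1(\mathbb{R}^N_y)$ function, the far field being controlled by the inclusion $d_{\mathbb{H}^N_+}^s \in L^{p-1}_{sp}(\mathbb{R}^N)$ combined with Lemma \ref{lm:cagatella}, and the near field by the $s$-H\"older regularity of $d_{\mathbb{H}^N_+}^s$ (placing it in $W^{s,p}_{\rm loc}$). Multiplying the pointwise identity by a $\varphi\in W^{s,p}$ with compact support in $\mathbb{H}^N_+$, integrating in $x$, and symmetrizing in $(x,y)$ upgrades the pointwise identity to weak $(s,p)$-harmonicity. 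The main obstacle is the final algebraic identity in the one-dimensional step: for $p=2$ it is the classical fact that $t_+^s$ is $s$-harmonic on $(0,\infty)$, but for general $p$ the nonlinearity of $J_p$ precludes a simple symmetry argument and requires the careful computation carried out in the reference quoted in the statement.
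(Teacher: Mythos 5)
Note first that the paper itself does not prove this lemma: the text simply cites \cite[Lemma 3.2]{IMS} (and \cite[Theorem 3.4.1]{BV} for $p=2$), so there is no in-house proof to compare against. Your outline is a plausible reconstruction of what such a proof must look like: integrate out the transverse variable $y'$ to reduce to one dimension, scale $t=x_N r$ to normalize $x_N=1$, and fold $(1,\infty)$ onto $(0,1)$ via $t\mapsto 1/t$, arriving at the need to show
\[
\int_0^1 \frac{(1-u^s)^{p-1}\,(1-u^{s-1})}{(1-u)^{1+sp}}\,du=-\frac{1}{sp}.
\]
Your algebra up to this point is correct, and the integrand is indeed integrable near $u=1$ since it behaves like $(1-u)^{p-1-sp}$ there. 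But this identity is the entire content of the lemma for general $p$, and your claim that it reduces ``to an Euler beta integral'' is accurate only for $p=2$, where one has the classical Gamma-function evaluation of $(-\Delta)^s x_+^\beta$ and the factor $\sin(\pi(\beta-s))$ vanishes at $\beta=s$. For $p\neq 2$ the factor $(1-u^s)^{p-1}$ is not of Beta type; no elementary change of variable collapses it, and the argument in \cite{IMS} is a dedicated, nontrivial computation rather than an appeal to a tabulated special-function identity. Your closing sentence tacitly concedes this, so what the proposal actually accomplishes is a reduction of the lemma to the very reference the paper cites, not a proof.

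A second unaddressed point is the passage from the spherical truncation $\{|x-y|\ge\varepsilon\}$ to the cylindrical one $\{|x_N-y_N|\ge\varepsilon\}$. After integrating in $y'$ over the slab $\{|x_N-y_N|<\varepsilon\}$, the residual weight $g(\tau)=\int_{\{|z|>\sqrt{1-\tau^2}/\tau\}}(1+|z|^2)^{-(N+sp)/2}\,dz$ (with $\tau=|x_N-y_N|/\varepsilon$) is merely bounded by $c_N$, and the crude estimate $|J_p(x_N^s-t^s)|\,|x_N-t|^{-1-sp}\lesssim |x_N-t|^{p-2-sp}$ then gives a contribution of order $\varepsilon^{p-1-sp}$, which does not tend to zero once $sp\ge p-1$, i.e.\ $s\ge 1/p'$. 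Closing this step requires exploiting the near-oddness of $J_p(x_N^s-t^s)$ across $t=x_N$ against the even weight $g(|t-x_N|/\varepsilon)$, which improves the bound to $O(\varepsilon^{p-sp})$; asserting the annular difference is $o(1)$ without this cancellation is a genuine gap. The final upgrade from the pointwise principal-value identity to $L^1_{\rm loc}$ convergence and to weak $(s,p)$-harmonicity via symmetrization and domination is unproblematic.
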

By appealing to the previous result and using the geometric properties of convex sets, we can prove the following
\begin{prop}
\label{prop:guido}
Let $K\subset\mathbb{R}^N$ be an open bounded convex set. For $1<p<\infty$ and $0<s<1$, we have that $d_K^s$ is locally weakly $(s,p)-$superharmonic.
\end{prop}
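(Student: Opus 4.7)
The strategy is to reduce the claim pointwise to the half-space case already treated in Lemma \ref{lm:IMS}, via comparison with supporting half-spaces. For each $x\in K$, fix $x'\in\partial K$ with $|x-x'|=d_K(x)$ and let $H_x$ be the closed half-space containing $K$ whose boundary hyperplane passes through $x'$ orthogonally to $x-x'$ (a supporting half-space by convexity). Then $d_{H_x}(x)=d_K(x)$ by construction; moreover, for $y\in K$, projecting $y$ onto $\partial H_x$ and noting that the segment joining $y$ to the projection must cross $\partial K$ no later than $\partial H_x$ gives $d_K(y)\le d_{H_x}(y)$, while the same inequality is trivial for $y\notin K$. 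Strict monotonicity of $J_p$ promotes these to the pointwise bound
\[
J_p\big(d_K(x)^s - d_K(y)^s\big)\ \ge\ J_p\big(d_{H_x}(x)^s - d_{H_x}(y)^s\big),\qquad x,y\in\mathbb{R}^N.
\]

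Fix now a nonnegative test function $\varphi$ supported in a compact set $\mathcal{O}\Subset K$, on which $\eta\le d_K\le M$ for some $0<\eta<M$. Since $d_K$ is $1$-Lipschitz on $\mathbb{R}^N$ and bounded away from zero on a neighborhood of $\mathcal{O}$, one checks that $d_K^s\in W^{s,p}_{\mathrm{loc}}(K)\cap L^{p-1}_{sp}(\mathbb{R}^N)$, and the estimate $|d_K(x)^s-d_K(y)^s|\le|x-y|^s$ (together with the compact support of $d_K^s$ and $\varphi$) shows that the double integral $\mathcal{I}$ in \eqref{super} with $u=d_K^s$ is absolutely convergent. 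Truncating to $\{|x-y|\ge\varepsilon\}$ gives $\mathcal{I}_\varepsilon\to\mathcal{I}$ by dominated convergence, and the standard $x\leftrightarrow y$ swap in the $\varphi(y)$ summand yields
\[
\mathcal{I}_\varepsilon = 2\int_{\mathcal{O}}\varphi(x)\,A_\varepsilon(x)\,dx,\qquad A_\varepsilon(x)=\int_{\mathbb{R}^N\setminus B_\varepsilon(x)}\frac{J_p(d_K(x)^s-d_K(y)^s)}{|x-y|^{N+sp}}\,dy.
\]
The pointwise inequality above gives $A_\varepsilon(x)\ge B_\varepsilon(x)$, where $B_\varepsilon(x)$ is defined by the same formula with $d_{H_x}$ in place of $d_K$.

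The key remark is that $B_\varepsilon(x)$ depends on $x$ only through $d_K(x)$: a rigid motion sends $H_x$ to $\mathbb{H}^N_+$ with $x$ mapped to a point at height $d_K(x)$, and rescaling by $d_K(x)$ reduces $B_\varepsilon(x)$ to $h_\varepsilon(d_K(x))$, where $h_\varepsilon(t)$ denotes the truncated principal value for $(-\Delta_p)^s d_{\mathbb{H}^N_+}^s$ evaluated at any point at height $t$. Applying Lemma \ref{lm:IMS} on the compact slab $[0,1]^{N-1}\times[\eta,M]\Subset\mathbb{H}^N_+$ yields $\int_\eta^M|h_\varepsilon(t)|\,dt\to 0$ as $\varepsilon\to 0$. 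Combining this with the coarea formula (valid since $|\nabla d_K|=1$ a.e.) and the uniform bound $\mathcal{H}^{N-1}(\{d_K=t\}\cap\mathcal{O})\le \mathrm{Per}(K)$ (which holds because the inner parallel sets of a convex $K$ have perimeter at most $\mathrm{Per}(K)$),
\[
\int_{\mathcal{O}}|B_\varepsilon(x)|\,dx = \int_\eta^M |h_\varepsilon(t)|\,\mathcal{H}^{N-1}(\{d_K=t\}\cap\mathcal{O})\,dt \ \longrightarrow\ 0,
\]
so that $\int\varphi\,B_\varepsilon\to 0$. Therefore $\mathcal{I}=\lim_\varepsilon \mathcal{I}_\varepsilon\ \ge\ 2\liminf_\varepsilon\int\varphi\,B_\varepsilon=0$, which is exactly the desired superharmonicity.

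The main obstacle is transferring the $x$-pointwise bound $A_\varepsilon\ge B_\varepsilon$ to an integrated estimate, since $H_x$ genuinely varies with $x$ and no single fixed half-space dominates. The resolution is the observation that the symmetries of a half-space (translation, rotation, dilation) collapse the whole family $\{B_\varepsilon(x)\}_{x\in\mathcal{O}}$ to a scalar function of $d_K(x)$, so that a single invocation of Lemma \ref{lm:IMS} in the canonical half-space, together with coarea, suffices to conclude.
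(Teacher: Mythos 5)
Your proof is correct and follows the same strategy as the paper: compare $d_K$ with the distance to a supporting half-space $H_x$ at a nearest boundary point, exploit monotonicity of $J_p$ to get a pointwise lower bound on the truncated nonlocal operator, and invoke the $(s,p)$-harmonicity of $d_{\mathbb{H}^N_+}^s$ (Lemma \ref{lm:IMS}) to send the lower bound to zero. Where you go a bit further than the paper is the transfer of the $L^1_{\rm loc}(\mathbb{H}^N_+)$ convergence in Lemma \ref{lm:IMS} to $L^1(\mathcal{O})$ convergence of the family $B_\varepsilon$: since the half-space $H_x$ genuinely varies with $x$, this step is not an immediate citation. You make it explicit by noting that, by the symmetries of the half-space, $B_\varepsilon(x)=h_\varepsilon(d_K(x))$ is a function of the scalar $d_K(x)$ alone, that a slab $[0,1]^{N-1}\times[\eta,M]\Subset\mathbb{H}^N_+$ gives $\int_\eta^M|h_\varepsilon|\,dt\to 0$, and that the coarea formula with $|\nabla d_K|=1$ a.e.\ together with the uniform perimeter bound on the (convex) level sets $\{d_K=t\}$ converts this into $\|B_\varepsilon\|_{L^1(\mathcal{O})}\to 0$. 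This is a genuine clarification of a point the paper handles tersely; the overall argument is otherwise the same.
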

\begin{proof}
We first observe that $d_K^s$ is locally Lipschitz, bounded and vanishing outside $K$. Thus we have $d_K^s\in W^{s,p}_{\rm loc}(K)\cap L^{p-1}_{s\,p}(\mathbb{R}^N)$. It is sufficient to prove that $d_K^s$ verifies \eqref{super} for every $\varphi\in C^\infty_0(K)$ nonnegative. Thus, let $\varphi\in C^\infty_0(K)$ be nonnegative and call $\mathcal{O}\Subset K$ its support.
We observe that the function
\[
(x,y)\mapsto \frac{J_p(d_K(x)^s-d_K(y)^s)\,\big(\varphi(x)-\varphi(y)\big)}{|x-y|^{N+s\,p}},
\]
is summable. Then by the Dominated Convergence Theorem, we have
\[
\begin{split}
\iint_{\mathbb{R}^N\times\mathbb{R}^N} &\frac{J_p(d_K(x)^s-d_K(y)^s)\,\big(\varphi(x)-\varphi(y)\big)}{|x-y|^{N+s\,p}}\,dx\,dy\\
&=\lim_{\varepsilon\to 0} \iint_{\mathcal{T}_\varepsilon} \frac{J_p(d_K(x)^s-d_K(y)^s)\,\big(\varphi(x)-\varphi(y)\big)}{|x-y|^{N+s\,p}}\,dx\,dy,
\end{split}
\]
where we set $\mathcal{T}_\varepsilon=\{(x,y)\in\mathbb{R}^N\times\mathbb{R}^N\, :\, |x-y|\ge \varepsilon\}$. By Lemma \ref{lm:madonnabellina} we have that for every fixed $\varepsilon$, the function
\[
(x,y)\mapsto \frac{J_p(d_K(x)^s-d_K(y)^s)}{|x-y|^{N+s\,p}}\,\varphi(x),
\]
is summable on $\mathcal{T}_\varepsilon$. Thus we get
\[
\begin{split}
\iint_{\mathcal{T}_\varepsilon} &\frac{J_p(d_K(x)^s-d_K(y)^s)\,\big(\varphi(x)-\varphi(y)\big)}{|x-y|^{N+s\,p}}\,dx\,dy\\
&=\iint_{\mathcal{T}_\varepsilon} \frac{J_p(d_K(x)^s-d_K(y)^s)}{|x-y|^{N+s\,p}}\,\varphi(x)\,dx\,dy-\iint_{\mathcal{T}_\varepsilon} \frac{J_p(d_K(x)^s-d_K(y)^s)}{|x-y|^{N+s\,p}}\,\varphi(y)\,dx\,dy\\
&=2\,\int_{\mathcal{O}} \left(\int_{\mathbb{R}^N\setminus B_\varepsilon(x)}\frac{J_p(d_K(x)^s-d_K(y)^s)}{|x-y|^{N+s\,p}}\,dy\right)\,\varphi(x)\,dx.
\end{split}
\]
In the second equality, we used Fubini's Theorem and the fact that $\varphi$ has support $\mathcal{O}$. In order to conclude, we need to show that 
\begin{equation}
\label{daje}
\lim_{\varepsilon\to 0}\int_{\mathcal{O}} \left(\int_{\mathbb{R}^N\setminus B_\varepsilon(x)}\frac{J_p(d_K(x)^s-d_K(y)^s)}{|x-y|^{N+s\,p}}\,dy\right)\,\varphi(x)\,dx\ge 0.
\end{equation}
We now take $0<\varepsilon\ll 1$ and $x\in \mathcal{O}$,
then we consider a point $x'\in\partial K$ such that $d_K(x)=|x-x'|$. We take a supporting hyperplane to $K$ at $x'$, up to a rigid motion we can suppose that this is given by $\{x\in\mathbb{R}^N\, :\, x_N=0\}$ and that $K\subset \mathbb{H}^N_+$. We observe that by convexity of $K$
\[
d_K(y)\le d_{\mathbb{H}^N_+}(y),\qquad \mbox{ for }y\in \mathbb{R}^N,
\]
and
\[
d_K(x)=|x-x'|=d_{\mathbb{H}^N_+}(x),
\]
see Figure \ref{fig:perotta}.
\begin{figure}[h]
\includegraphics[scale=.3]{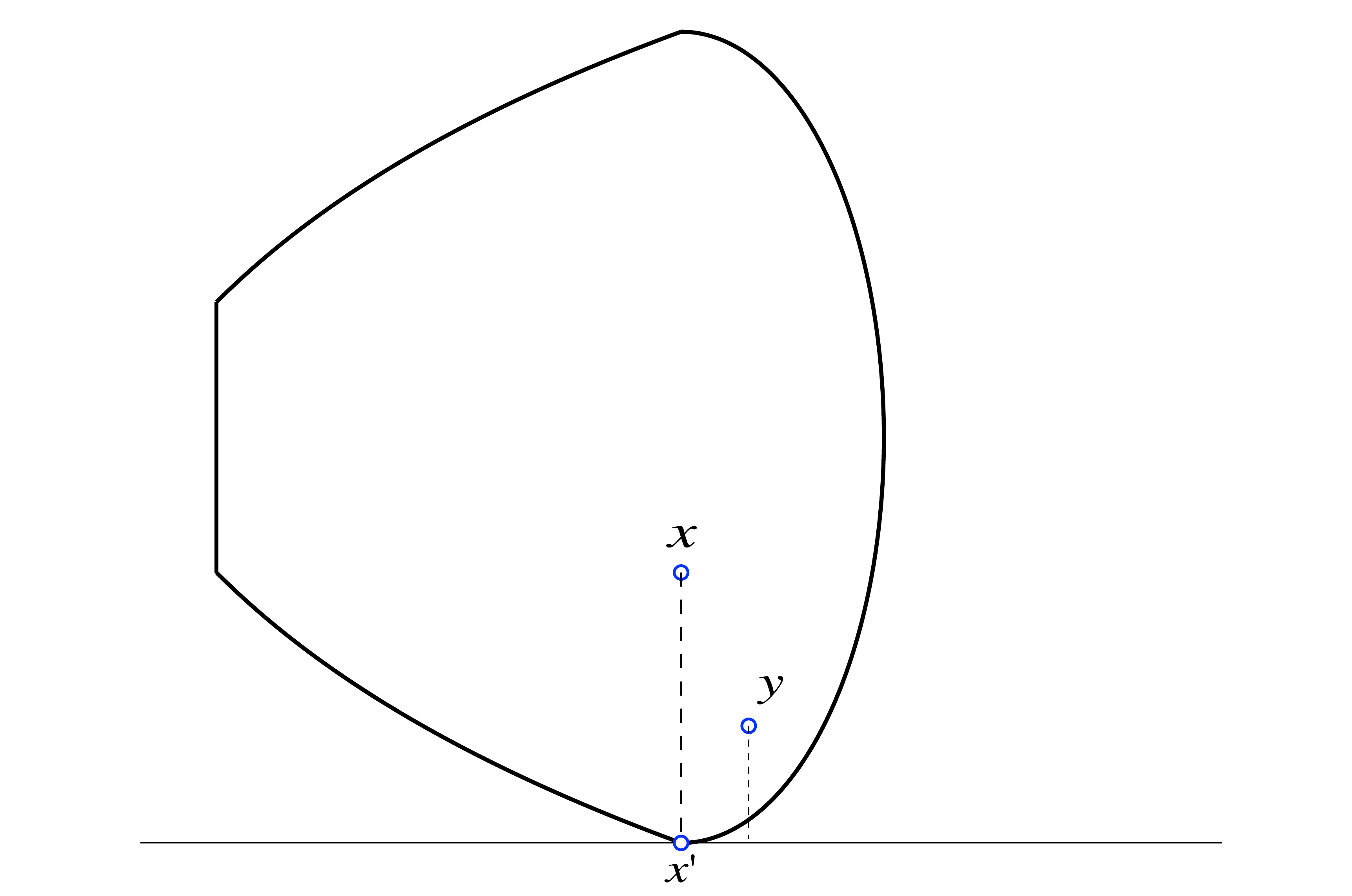}
\caption{The distance of $y$ from $\partial K$ is smaller than its distance from the hyperplane.}
\label{fig:perotta}
\end{figure}
\par
By exploiting these facts and the monotonicity of $J_p$, we obtain for $x\in \mathcal{O}$
\[
\int_{\mathbb{R}^N\setminus B_\varepsilon(x)} \frac{J_p(d_K(x)^s-d_K(y)^s)}{|x-y|^{N+s\,p}}\,dy\ge \int_{\mathbb{R}^N\setminus B_\varepsilon(x)} \frac{J_p(d_{\mathbb{H}^N_+}(x)^s-d_{\mathbb{H}^N_+}(y)^s)}{|x-y|^{N+s\,p}}\,dy.
\]
Observe that the last family of functions converges to $0$ in $L^1(\mathcal{O})$ as $\varepsilon$ goes to $0$, thanks to Lemma \ref{lm:IMS}. 
Thus, by multiplying the previous inequality by $\varphi$ which is nonnegative, integrating over $\mathcal{O}$ and using Lemma \ref{lm:IMS}, we thus obtain
\[
\begin{split}
\lim_{\varepsilon\to 0}\int_{\mathcal{O}} &\left(\int_{\mathbb{R}^N\setminus B_\varepsilon(x)}\frac{J_p(d_K(x)^s-d_K(y)^s)}{|x-y|^{N+s\,p}}\,dy\right)\,\varphi(x)\,dx\\
&\ge \lim_{\varepsilon\to 0}\int_{\mathcal{O}}\left(\int_{\mathbb{R}^N\setminus B_\varepsilon(x)} \frac{J_p(d_{\mathbb{H}^N_+}(x)^s-d_{\mathbb{H}^N_+}(y)^s)}{|x-y|^{N+s\,p}}\,dy\right)\,\varphi(x)\,dx= 0.
\end{split}
\]
This proves \eqref{daje} and thus we get the desired conclusion.
\end{proof}

\section{Proof of Theorem \ref{teo:torino}}
\label{sec:4}

We divide the proof in two parts: we first prove
\begin{equation}
\label{weaker}
\frac{s^p\,A}{1-s}\,\int_K \frac{|u|^p}{d_K^{s\,p}}\,dx\le [u]^p_{W^{s,p}(\mathbb{R}^N)},\qquad \mbox{ for every }u\in C^\infty_0(K),
\end{equation}
with $A=A(N,p)>0$.
Then in the second part we show how to improve the constant in the left-hand side for $s$ close to $0$ and obtain \eqref{nostra}, by using elementary geometric considerations. The proof in this second part is essentially contained in \cite[pages 440--441]{CS}, as pointed out to us by Bart\l omiej Dyda.
\subsection{Proof of inequality \eqref{weaker}}
In turn, we divide the proof in two cases: first we prove the result under the additional assumptions that $K$ is bounded, then we extend it to general convex sets not coinciding with the whole space.
\vskip.2cm\noindent
{\bf Case 1: bounded convex sets.} By Proposition \ref{prop:guido} we know that
\begin{equation}
\label{super1}
\iint_{\mathbb{R}^N\times \mathbb{R}^N}\frac{J_p(d_K(x)^s-d_K(y)^s)(\varphi(x)-\varphi(y))}{|x-y|^{N+s\,p}}\,dx\,dy\ge 0,
\end{equation}
for every nonnegative $\varphi\in W^{s,p}(K)$ with compact support in $K$. Then we test with 
\[
\varphi=\frac{|u|^p}{(d_K^s+\varepsilon)^{p-1}},
\]
where $u\in C^\infty_0(K)$ and $\varepsilon>0$. By Lemma \ref{lm:minchiata}, we have that $\varphi$ is admissible. Indeed, we already know that $d_K^s\in W^{s,p}_{\rm loc}(K)\cap L^\infty(K)$. Moreover, for every $\varepsilon>0$ the function $f(t)=(t+\varepsilon)^{1-p}$ is Lipschitz for $t>0$, thus $(d_K^s+\varepsilon)^{1-p}=f\circ d_K^s\in W^{s,p}_{\rm loc}(K)\cap L^\infty(K)$ as well.
\par
Let us call $\mathcal{O}$ the support of $u$, then from \eqref{super1} we have
\begin{equation}
\label{partiamo?}
\begin{split}
0&\le \iint_{K\times K}\frac{J_p(d_K(x)^s-d_K(y)^s)}{|x-y|^{N+s\,p}}\,\left(\frac{|u(x)|^p}{(d_K(x)^s+\varepsilon)^{p-1}}-\frac{|u(y)|^p}{(d_K(y)^s+\varepsilon)^{p-1}}\right)\,dx\,dy\\
&+2\iint_{\mathcal{O}\times (\mathbb{R}^N\setminus K)}\frac{J_p(d_K(x)^s)}{|x-y|^{N+s\,p}}\,\frac{|u(x)|^p}{(d_K(x)^s+\varepsilon)^{p-1}}\,dx\,dy.
\end{split}
\end{equation}
We first observe that
\begin{equation}
\label{pezzo1}
\iint_{\mathcal{O}\times (\mathbb{R}^N\setminus K)}\frac{J_p(d_K(x)^s)}{|x-y|^{N+s\,p}}\,\frac{|u(x)|^p}{(d_K(x)^s+\varepsilon)^{p-1}}\,dx\,dy\le \iint_{\mathcal{O}\times (\mathbb{R}^N\setminus K)}\frac{|u(x)|^p}{|x-y|^{N+s\,p}}\,dx\,dy.
\end{equation}
We now need to estimate the double integral
\[
\mathcal{I}:=\iint_{K\times K}\frac{J_p(d_K(x)^s-d_K(y)^s)}{|x-y|^{N+s\,p}}\,\left(\frac{|u(x)|^p}{(d_K(x)^s+\varepsilon)^{p-1}}-\frac{|u(y)|^p}{(d_K(y)^s+\varepsilon)^{p-1}}\right)\,dx\,dy.
\]
For this, we crucially exploit the fundamental inequality of Lemma \ref{lm:bacini}, with the choices
\[
a=d_K(x)^s+\varepsilon,\quad b=d_K(y)^s+\varepsilon,\quad c=|u(x)|,\quad d=|u(y)|.
\]
This entails
\begin{equation}
\label{eccosci!}
\begin{split}
\mathcal{I}&\le -C_2\,\iint_{K\times K} \left|\frac{d_K(x)^s-d_K(y)^s}{d_K(x)^s+d_K(y)^s+2\,\varepsilon}\right|^p\,(|u(x)|^p+|u(y)|^p)\,\frac{dx\,dy}{|x-y|^{N+s\,p}}\\
&+C_3\,\iint_{K\times K} \frac{\big||u(x)|-|u(y)|\big|^p}{|x-y|^{N+s\,p}}\,dx\,dy,
\end{split}
\end{equation}
where $C_2$ and $C_3$ are as in Lemma \ref{lm:bacini}.
By using \eqref{eccosci!} in \eqref{partiamo?}, together with \eqref{pezzo1}, we obtain
\begin{equation}
\label{guasi}
C_2\,\iint_{K\times K} \left|\frac{d_K(x)^s-d_K(y)^s}{d_K(x)^s+d_K(y)^s}\right|^p\,(|u(x)|^p+|u(y)|^p)\,\frac{dx\,dy}{|x-y|^{N+s\,p}}\le C_3\,\big[|u|\big]^p_{W^{s,p}(\mathbb{R}^N)}.
\end{equation}
To obtain \eqref{guasi}, we also took the limit as $\varepsilon$ goes to $0$ and used Fatou's Lemma.
We observe that by symmetry, we have 
\begin{equation}
\label{hidden}
\begin{split}
\iint_{K\times K} &\left|\frac{d_K(x)^s-d_K(y)^s}{d_K(x)^s+d_K(y)^s}\right|^p\,(|u(x)|^p+|u(y)|^p)\,\frac{dx\,dy}{|x-y|^{N+s\,p}}\\
&=2\,\iint_{K\times K} \left|\frac{d_K(x)^s-d_K(y)^s}{d_K(x)^s+d_K(y)^s}\right|^p\,|u(x)|^p\,\frac{dx\,dy}{|x-y|^{N+s\,p}}\\
&\ge 2\,\int_K\left(\int_{\{y\in K\, :\, d_K(y)\le d_K(x)\}}\left|\frac{d_K(x)^s-d_K(y)^s}{d_K(x)^s+d_K(y)^s}\right|^p\,\frac{dy}{|x-y|^{N+s\,p}}\right)\,|u(x)|^p\,dx.
\end{split}
\end{equation}
We now use the pointwise inequality \eqref{gufetti}, so to obtain
\[
\begin{split}
\iint_{K\times K} &\left|\frac{d_K(x)^s-d_K(y)^s}{d_K(x)^s+d_K(y)^s}\right|^p\,(|u(x)|^p+|u(y)|^p)\,\frac{dx\,dy}{|x-y|^{N+s\,p}}\\
&\ge \frac{s^p}{2^{p-1}}\,\int_K \left(\int_{\{y\in K\, :\, d_K(y)\le d_K(x)\}}\frac{|d_K(x)-d_K(y)|^p}{|x-y|^{N+s\,p}}\,dy\right)\,\frac{|u(x)|^p}{d_K(x)^{p}}\,dx.
\end{split}
\]
By using this in \eqref{guasi} and then applying the expedient estimate of Proposition \ref{prop:freddo}, we end up with
\[
\frac{s^p\,A}{1-s}\,\int_K \frac{|u|^p}{d_K^{s\,p}}\,dx\le [u]^p_{W^{s,p}(\mathbb{R}^N)},
\]
where we have used the triangle inequality to replace the seminorm of $|u|$ with that of $u$, i.e.
\[
\Big||u(x)|-|u(y)|\Big|\le |u(x)-u(y)|.
\]
This concludes the proof of \eqref{weaker}. We observe that 
\begin{equation}
\label{costante1}
A=\frac{C_1}{2^{p-1}}\frac{C_2}{C_3},
\end{equation}
where $C_1=C_1(N,p)$ is the constant of Proposition \ref{prop:freddo}, and $C_2,\,C_3$ (which depend only on $p$) come from Lemma \ref{lm:bacini}.
\vskip.2cm\noindent
{\bf Case 2: general convex sets.} We now take $K\not=\mathbb{R}^N$ an open unbounded convex set. For every $R>0$ we set $K_R=K\cap B_R(0)$. Let us take $u\in C^\infty_0(K)$, then for every $R$ large enough, we have $u\in C^\infty_0(K_R)$ as well. By using the previous case, we then get
\[
\frac{s^p\,A}{1-s}\,\int_{K} \frac{|u|^p}{d_{K_R}^{s\,p}}\,dx=\frac{s^p\,A}{1-s}\,\int_{K_R} \frac{|u|^p}{d_{K_R}^{s\,p}}\,dx\le [u]_{W^{s,p}(\mathbb{R}^N)}^p.
\]  
By observing that $d_{K_R}\le d_K$, we then get the desired conclusion.

\subsection{Improved constant for $s$ close to $0$}
\label{sec:rotto}

For every $x\in K$,  we take $x_0\in \mathbb{R}^N\setminus K$ such that 
\[
|x-x_0|=2\,d_K(x)\qquad \mbox{ and }\qquad \frac{x+x_0}{2}\in\partial K.
\] 
Then we can estimate
\[
\begin{split}
\iint_{\mathbb{R}^N\times\mathbb{R}^N} \frac{|u(x)-u(y)|^p}{|x-y|^{N+s\,p}}\,dx\,dy\ge \int_K |u(x)|^p\,\left(\int_{(\mathbb{R}^N\setminus K)\setminus B_{d_K(x)}(x_0)} \frac{dy}{|x-y|^{N+s\,p}}\right)\,dx.
\end{split}
\]
We observe that for every $y\in \mathbb{R}^N\setminus B_{d_K(x)}(x_0)$, we have
\[
|x-y|\le|x-x_0|+|x_0-y|=2\,d_K(x)+|x_0-y|\le 3\,|x_0-y|. 
\]
By convexity, we have that (see Figure \ref{fig:sverzina})
\[
K_x:=\{y\in \mathbb{R}^N\setminus B_{d_K(x)}\, :\, \langle y-x_0,x-x_0\rangle <0\}\subset (\mathbb{R}^N\setminus K)\setminus B_{d_K(x)}(x_0).
\]
\begin{figure}
\includegraphics[scale=.35]{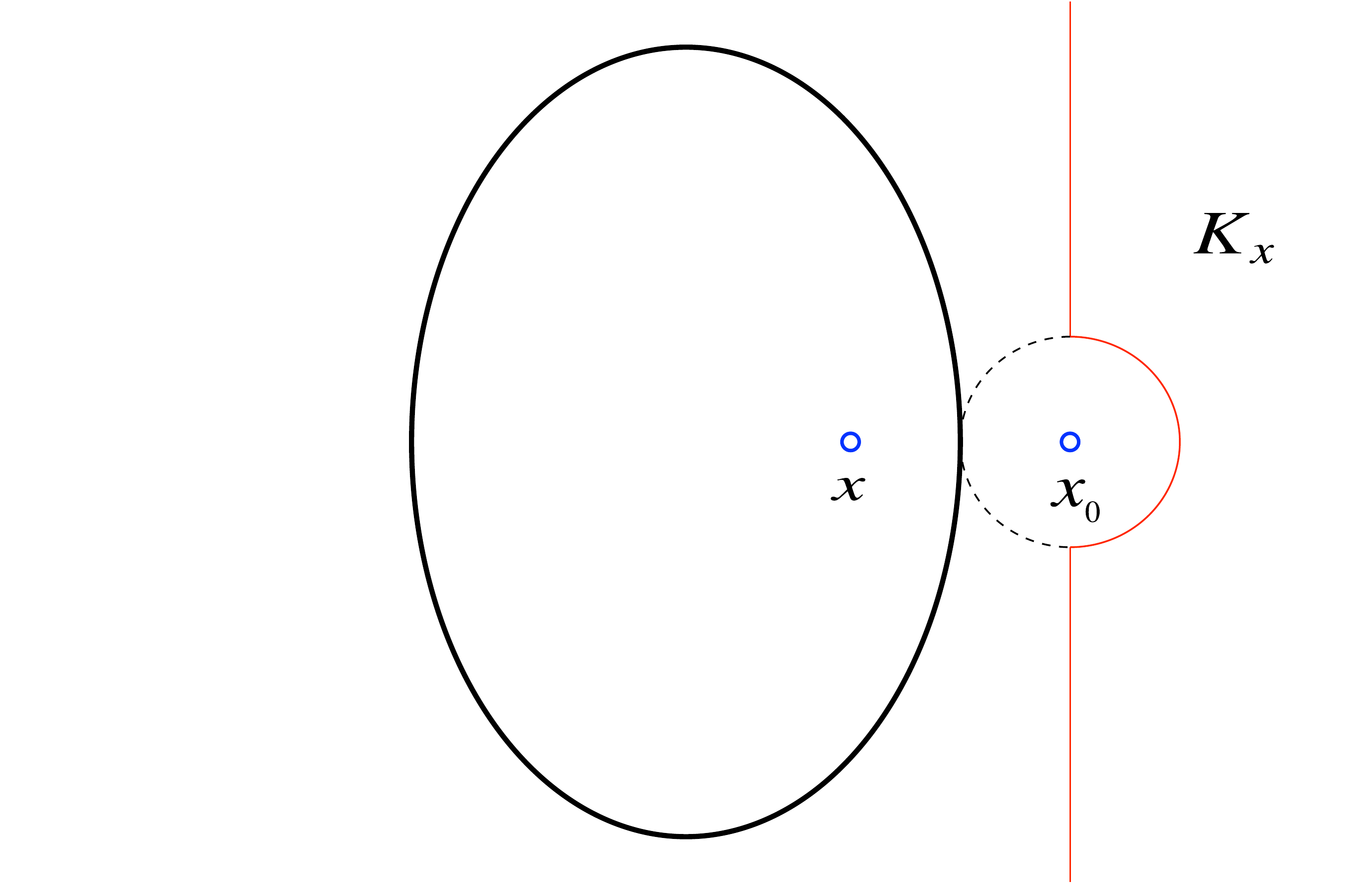}
\caption{The set $K_x$ in the second part of the proof of Theorem \ref{teo:torino}.}
\label{fig:sverzina}
\end{figure}
By joining the last two informations, we get
\[
\begin{split}
\iint_{\mathbb{R}^N\times\mathbb{R}^N} \frac{|u(x)-u(y)|^p}{|x-y|^{N+s\,p}}\,dx\,dy&\ge \left(\frac{1}{3}\right)^{N+s\,p}\,\int_K |u(x)|^p\,\left(\int_{K_x} \frac{dy}{|x_0-y|^{N+s\,p}}\right)\,dx\\
&=\left(\frac{1}{3}\right)^{N+s\,p}\,\left(\int_{\{\omega\in\mathbb{S}^{N-1}\, :\, \langle\omega,\mathbf{e}_1\rangle>0\}}d\mathcal{H}^{N-1} \right)\\
&\times \int_K \left(\int_{d_K(x)}^{+\infty} \varrho^{-1-s\,p}\,d\varrho\right)\,|u(x)|^p\,dx\\
&=\frac{N\,\omega_N}{2}\,\frac{1}{s\,p}\,\left(\frac{1}{3}\right)^{N+s\,p}\,\int_K\frac{|u|^p}{d_K^{s\,p}}\,dx.
\end{split}
\]
In conclusion, we have obtained for every $u\in C^\infty_0(K)$
\begin{equation}
\label{weakerB}
\frac{B}{s}\,\int_K \frac{|u|^p}{d_K^{s\,p}}\,dx\le [u]_{W^{s,p}(\mathbb{R}^N)}^p,\qquad \mbox{ with } B=\frac{1}{p}\,\frac{N\,\omega_N}{2}\,3^{-N-p}.
\end{equation}
By joining \eqref{weaker} and \eqref{weakerB}, we finally get
\[
\frac{A\,s^{p+1}+(1-s)\,B}{2\,s\,(1-s)}\,\int_K \frac{|u|^p}{d_K^{s\,p}}\,dx\le [u]_{W^{s,p}(\mathbb{R}^N)}^p,\qquad \mbox{ for every }u\in C^\infty_0(K).
\]
By observing that the quantity $A\,s^{p+1}+(1-s)\,B$ is bounded from below by a positive constant independent of $s$, we finally get the desired inequality \eqref{nostra}.

\begin{oss}\label{oss:constant}
Let us set $\Phi(s)=A\,s^{p+1}+(1-s)\,B$, where $A$ and $B$ are defined in \eqref{costante1} and \eqref{weakerB}. It is easy to see that the constant $\mathcal C$ appearing in \eqref{nostra} is given by
\[
\mathcal C=\min_{s\in(0,1)} \frac{\Phi(s)}{2}=\frac{1}{2}\,\left\{\begin{array}{rl}
B\left(1-\dfrac{p}{p+1}\,\left(\dfrac{B}{(p+1)A}\right)^{1/p}\right),& \mbox{ if } B<(p+1)\,A,\\
&\\
A, & \mbox{ if } B\ge (p+1)\,A.
\end{array}
\right.
\]
\end{oss}

\section{Some consequences}
\label{sec:consequence} For an open set $\Omega\subset\mathbb{R}^N$, we define the {\it homogeneous Sobolev-Slobodecki\u{\i} space} $\mathcal{D}^{s,p}_0(\Omega)$ as the completion of $C^\infty_0(\Omega)$ with respect to the norm
\[
u\mapsto [u]_{W^{s,p}(\mathbb{R}^N)}.
\]
We also define the {\it first eigenvalue of the fractional $p-$Laplacian of order $s$ in $\Omega$}, i.e.
\[
\lambda^s_{1,p}(\Omega):=\inf_{u\in C^\infty_0(\Omega)}\left\{[u]^p_{W^{s,p}(\mathbb{R}^N)}\, :\, \int_\Omega |u|^p\,dx=1\right\}.
\]
This is the sharp constant in the fractional Poincar\'e inequality
\[
C\,\int_\Omega |u|^p\,dx\le [u]^p_{W^{s,p}(\mathbb{R}^N)},\qquad \mbox{ for every }u\in C^\infty_0(\Omega).
\]
We observe that $\lambda^s_{1,p}(\Omega)>0$ is equivalent to the continuity of the embedding $\mathcal{D}^{s,p}_0(\Omega)\hookrightarrow L^p(\Omega)$.
\par
We highlight a couple of consequences of our main result, in terms of lower bounds on $\lambda^s_{1,p}$. As usual, we pay particular attention to the factor $s\,(1-s)$.
\begin{coro}
\label{coro:poincare}
Let $1<p<\infty$ and $0<s<1$. Let $K\subset\mathbb{R}^N$ be an open convex set such that 
\[
R_K:=\sup_{x\in K} d_K(x)<+\infty.
\]
Then $\mathcal{D}^{s,p}_0(K)$ is a functional space, continuously embedded in $L^p(K)$. Moreover, it holds
\[
\frac{\mathcal{C}}{R_K^{s\,p}}\le s\,(1-s)\,\lambda^s_{1,p}(K),
\]
where $\mathcal{C}$ is the same constant as in Theorem \ref{teo:torino}.
\end{coro}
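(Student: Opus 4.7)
The plan is to deduce the corollary directly from Theorem \ref{teo:torino} by using the trivial pointwise bound $d_K(x)\le R_K$ on $K$. Indeed, since $K$ is convex with $R_K<\infty$ and $K\not=\mathbb{R}^N$ (for otherwise $d_K\equiv+\infty$), Theorem \ref{teo:torino} applies and yields, for every $u\in C^\infty_0(K)$,
\[
\frac{\mathcal{C}}{s\,(1-s)}\,\frac{1}{R_K^{s\,p}}\,\int_K |u|^p\,dx\le \frac{\mathcal{C}}{s\,(1-s)}\,\int_K \frac{|u|^p}{d_K^{s\,p}}\,dx\le [u]^p_{W^{s,p}(\mathbb{R}^N)}.
\]
Taking the infimum over $u\in C^\infty_0(K)$ with $\|u\|_{L^p(K)}=1$ gives the asserted lower bound on $\lambda^s_{1,p}(K)$.

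To justify that $\mathcal{D}^{s,p}_0(K)$ is a genuine \emph{functional} space continuously embedded in $L^p(K)$ (rather than an abstract completion), I would argue as follows. The inequality above shows that on $C^\infty_0(K)$ the Gagliardo seminorm $[\,\cdot\,]_{W^{s,p}(\mathbb{R}^N)}$ dominates the $L^p(K)$ norm. Hence any Cauchy sequence $\{u_n\}\subset C^\infty_0(K)$ with respect to the seminorm is also Cauchy in $L^p(K)$, and so converges to a unique $u\in L^p(K)$. This realizes the abstract completion as a subspace of $L^p(K)$, and the embedding $\mathcal{D}^{s,p}_0(K)\hookrightarrow L^p(K)$ is continuous with embedding constant controlled by $R_K^{s}/\mathcal{C}^{1/p}\,(s(1-s))^{-1/p}$. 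In particular, $[\,\cdot\,]_{W^{s,p}(\mathbb{R}^N)}$ is a norm (not merely a seminorm) on $\mathcal{D}^{s,p}_0(K)$.

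There is no real obstacle here beyond these two routine observations; the key input is Theorem \ref{teo:torino}, and the boundedness assumption on $d_K$ is used only to replace the weight $d_K^{-s\,p}$ by the constant $R_K^{-s\,p}$.
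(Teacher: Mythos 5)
Your proof is correct and is exactly the argument the paper has in mind: the authors simply state that the corollary ``is a straightforward consequence of \eqref{nostra} and of the definition of $R_K$,'' which is precisely your step of replacing the weight $d_K^{-sp}$ by the constant lower bound $R_K^{-sp}$ in Theorem \ref{teo:torino} and then passing to the infimum. Your additional remarks explaining why the completion $\mathcal{D}^{s,p}_0(K)$ embeds into $L^p(K)$ are a standard elaboration that the paper leaves implicit.
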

\begin{proof}
This is a straightforward consequence of \eqref{nostra} and of the definition of $R_K$.
\end{proof}
The quantity $R_K$ above is called {\it inradius of } $K$. Observe that this is the radius of the largest ball inscribed in $K$.
\par
For a general open set, we have the following
\begin{coro}[Poincar\'e inequality for sets bounded in one direction]
Let $\omega_0\in\mathbb{R}^N$ be such that $|\omega_0|=1$ and let $\ell_1,\ell_2\in\mathbb{R}$ with $\ell_1<\ell_2$.  For every open set such that 
\[
\Omega\subset \{x\in\mathbb{R}^N\, :\, \ell_1<\langle x,\omega_0\rangle<\ell_2\}.
\]
we have
\[
\mathcal{C}\,\left(\frac{2}{\ell_2-\ell_1}\right)^{s\,p}\le s\,(1-s)\,\lambda^s_{1,p}(\Omega),
\]
where $\mathcal{C}$ is the same constant as in Theorem \ref{teo:torino}.
\end{coro}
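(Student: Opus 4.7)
The plan is to reduce the statement to Theorem \ref{teo:torino} (or equivalently Corollary \ref{coro:poincare}) by enclosing $\Omega$ in the convex slab
\[
K:=\{x\in\mathbb{R}^N\, :\, \ell_1<\langle x,\omega_0\rangle<\ell_2\}.
\]
This is an open convex set properly contained in $\mathbb{R}^N$, and by hypothesis $\Omega\subset K$. In particular, any $u\in C^\infty_0(\Omega)$ can be regarded, after extension by zero, as an element of $C^\infty_0(K)$, so Theorem \ref{teo:torino} applies to $u$ on $K$:
\[
\frac{\mathcal{C}}{s\,(1-s)}\int_K \frac{|u|^p}{d_K^{s\,p}}\,dx\le [u]^p_{W^{s,p}(\mathbb{R}^N)}.
\]

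The key geometric observation is that the inradius of the slab $K$ equals $(\ell_2-\ell_1)/2$. Indeed, by the explicit description of the boundary of $K$ (two parallel hyperplanes), one has
\[
d_K(x)=\min\bigl\{\langle x,\omega_0\rangle-\ell_1,\ \ell_2-\langle x,\omega_0\rangle\bigr\}\le \frac{\ell_2-\ell_1}{2},\qquad x\in K,
\]
so $d_K^{s\,p}\le \bigl((\ell_2-\ell_1)/2\bigr)^{s\,p}$ pointwise in $K$. Plugging this into the Hardy inequality above and using that $|u|^p$ is supported in $\Omega\subset K$ (so $\int_K|u|^p\,dx=\int_\Omega |u|^p\,dx$), we obtain
\[
\mathcal{C}\,\left(\frac{2}{\ell_2-\ell_1}\right)^{s\,p}\int_\Omega |u|^p\,dx\le s\,(1-s)\,[u]^p_{W^{s,p}(\mathbb{R}^N)}.
\]

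Dividing by $\int_\Omega |u|^p\,dx$ and taking the infimum over $u\in C^\infty_0(\Omega)$ gives the claimed bound by the very definition of $\lambda^s_{1,p}(\Omega)$. There is no real obstacle here: the only point requiring a sentence of justification is the extension of test functions from $\Omega$ to $K$ and the elementary inradius computation for the slab; both are routine.
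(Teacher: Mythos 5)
Your proof is correct and follows essentially the same route as the paper: the authors also enclose $\Omega$ in the convex slab $\mathcal{S}$, use domain monotonicity $\lambda^s_{1,p}(\Omega)\ge\lambda^s_{1,p}(\mathcal{S})$, and then invoke Corollary \ref{coro:poincare} with $R_{\mathcal{S}}=(\ell_2-\ell_1)/2$; you simply unpack that corollary by applying Theorem \ref{teo:torino} directly and bounding $d_K$ by the inradius.
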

\begin{proof}
We set for simplicity $\mathcal{S}=\{x\in\mathbb{R}^N\, :\, \ell_1<\langle x,\omega_0\rangle<\ell_2\}$. Then by domain inclusion we directly obtain $\lambda^s_{1,p}(\Omega)\ge \lambda^s_{1,p}(\mathcal{S})$. It is now sufficient to use Corollary \ref{coro:poincare} for the convex set $\mathcal{S}$, for which $R_{\mathcal{S}}=(\ell_2-\ell_1)/2$.
\end{proof}
\appendix 

\section{Some pointwise inequalities}

We collect here some pointwise inequalities needed throughout the whole paper. The most important one is Lemma \ref{lm:bacini}. We recall the notation
\[
J_p(t)=|t|^{p-2}\,t,\qquad \mbox{ for }t\in\mathbb{R}.
\]
\begin{lm}
\label{lm:sco}
Let $1<p<\infty$, for every $a,b> 0$ we have
\[
J_p(a-b)\,\left(\frac{1}{b^{p-1}}-\frac{1}{a^{p-1}}\right)\ge (p-1)\,|\log b-\log a|^p.
\]
Equality holds if and only if $a=b$.
\end{lm}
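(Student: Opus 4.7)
The plan is to rewrite both sides as integrals over the interval between $a$ and $b$, and then recognize the inequality as a direct consequence of H\"older's inequality.

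First, both sides of the claimed inequality are invariant under swapping $a$ and $b$: on the right-hand side this is obvious, while on the left-hand side swapping changes both $J_p(a-b)$ and $1/b^{p-1}-1/a^{p-1}$ by a sign, leaving the product unchanged. Hence I would assume without loss of generality that $a\ge b>0$, and observe that the case $a=b$ is trivial (both sides vanish). So suppose $a>b>0$.

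Next, I would express each of the three quantities appearing in the inequality as an integral over $(b,a)$:
\[
a-b=\int_b^a ds,\qquad \frac{1}{b^{p-1}}-\frac{1}{a^{p-1}}=(p-1)\int_b^a \frac{ds}{s^p},\qquad \log a-\log b=\int_b^a \frac{ds}{s}.
\]
Since $a>b$, we have $J_p(a-b)=(a-b)^{p-1}$, so the inequality to prove reduces, after dividing both sides by $p-1$, to
\[
\Bigl(\int_b^a ds\Bigr)^{p-1}\int_b^a \frac{ds}{s^p}\ge \Bigl(\int_b^a \frac{ds}{s}\Bigr)^p.
\]

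The last display is exactly H\"older's inequality: with conjugate exponents $p'=p/(p-1)$ and $p$, applied to the functions $1$ and $s^{-1}$ on $(b,a)$,
\[
\int_b^a \frac{ds}{s}=\int_b^a 1\cdot\frac{1}{s}\,ds\le \Bigl(\int_b^a ds\Bigr)^{1/p'}\Bigl(\int_b^a \frac{ds}{s^p}\Bigr)^{1/p},
\]
and raising to the $p$-th power yields precisely the required bound. Equality in H\"older forces $1$ and $1/s$ to be proportional almost everywhere on $(b,a)$, which is impossible unless the interval is degenerate, i.e.\ $a=b$; this gives the equality statement.

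There is no real obstacle here: the only mildly delicate point is noticing the integral representations that make the three quantities directly comparable via H\"older. Everything else is routine.
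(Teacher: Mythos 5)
Your proof is correct. The paper itself gives no proof of this lemma but simply cites \cite[Lemma A.2 \& Remark A.3]{BP}, so your self-contained argument is necessarily a different route from what appears in the text. It is a nice one: after reducing by symmetry to $a>b$, you write
\[
a-b=\int_b^a ds,\qquad \frac{1}{b^{p-1}}-\frac{1}{a^{p-1}}=(p-1)\int_b^a \frac{ds}{s^p},\qquad \log\frac{a}{b}=\int_b^a \frac{ds}{s},
\]
and the claim collapses to H\"older's inequality for the pair $(1,\,1/s)$ on $(b,a)$ with exponents $(p',p)$, including the equality case since $s^{-p}$ cannot be a.e.\ constant on a non-degenerate interval. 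All the steps check out, and the only tiny imprecision is in the phrasing of the equality clause: strictly, equality in H\"older requires $1^{p'}$ and $s^{-p}$ to be proportional a.e., rather than $1$ and $1/s$, but the conclusion $a=b$ is the same. Compared with relying on an external reference, your argument has the advantage of being short, transparent, and of immediately explaining both the constant $p-1$ (coming from differentiating $-s^{1-p}$) and the sharpness at $a=b$.
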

\begin{proof}
This is proved in \cite[Lemma A.2 \& Remark A.3]{BP}.
\end{proof}
\begin{lm}
\label{lm:acazzo}
For every $a,b>0$ we have
\[
\frac{|a-b|}{a+b}\le |\log a-\log b|.
\]
Equality holds if and only if $a=b$.
\end{lm}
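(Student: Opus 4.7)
The plan is to reduce the inequality to a one-variable problem. Both sides are symmetric under the exchange $a\leftrightarrow b$, so without loss of generality I may assume $a\geq b>0$. Dividing numerator and denominator by $b$ and setting $t=a/b\in[1,+\infty)$, the statement rewrites as
\[
\frac{t-1}{t+1}\leq \log t, \qquad t\geq 1,
\]
with equality if and only if $t=1$. This is the inequality to prove.

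To handle this one-variable inequality I would introduce the auxiliary function $g(t):=\log t - (t-1)/(t+1)$ on $(0,+\infty)$. Clearly $g(1)=0$. A direct computation gives
\[
g'(t)=\frac{1}{t}-\frac{2}{(t+1)^2}=\frac{(t+1)^2-2\,t}{t\,(t+1)^2}=\frac{t^2+1}{t\,(t+1)^2}>0,
\]
for every $t>0$. Hence $g$ is strictly increasing on $(0,+\infty)$, and in particular $g(t)>g(1)=0$ for all $t>1$. Tracing back the substitution, this yields $|\log a - \log b|\geq |a-b|/(a+b)$, with strict inequality unless $a=b$, which is exactly the equality case claimed in the statement.

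There is no real obstacle here: the only point that requires a little care is the algebraic simplification of $g'$, which makes the positivity manifest via the identity $(t+1)^2-2t=t^2+1$. As a side remark, one could alternatively write $\log(a/b)=(a-b)\int_0^1 [b+r(a-b)]^{-1}\,dr$ for $a\geq b$ and apply the Cauchy--Schwarz inequality in the form $\int_0^1 f\cdot\int_0^1 f^{-1}\geq 1$ with $f(r)=b+r(a-b)$, obtaining in fact the slightly stronger bound $|\log a-\log b|\geq 2\,|a-b|/(a+b)$; but the elementary monotonicity argument above is shorter and sufficient for the statement.
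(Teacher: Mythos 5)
Your proof is correct and is essentially the same as the paper's: both reduce to the one-variable inequality by a reciprocal substitution ($t=a/b$ for you, $t=b/a$ in the paper) and then study the monotonicity of the same auxiliary function ($g(t)=\log t+\frac{1-t}{1+t}$), you on $[1,\infty)$ and the paper on $(0,1)$; you merely make the derivative computation explicit where the paper appeals to ``basic Calculus.'' Your closing remark that the integral representation plus Cauchy--Schwarz gives the stronger bound with constant $2$ is also correct, and is in fact the sharp constant as $a\to b$.
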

\begin{proof}
We observe that if $a=b$ there is nothing to prove. We then take $a\not=b$ and without loss of generality we can suppose $a>b$. The seeked inequality is then equivalent to
\[
\frac{a-b}{a+b}\le \log \frac{a}{b},\qquad \mbox{ for } 0<b<a.
\]
By setting $t=b/a$, this in turn is equivalent to prove that
\[
\frac{1-t}{1+t}\le -\log t,\qquad \mbox{ for }0<t<1.
\]
By basic Calculus, it is easily seen that the function
\[
\varphi(t)=\log t+\frac{1-t}{1+t},
\]
is strictly increasing for $t\in (0,1)$ and $\varphi(1)=0$. This gives the desired conclusion.
\end{proof}
\begin{oss}
By combining Lemma \ref{lm:sco} and \ref{lm:acazzo}, we also obtain
\begin{equation}
\label{lucio}
J_p(a-b)\,\left(\frac{1}{b^{p-1}}-\frac{1}{a^{p-1}}\right)\ge (p-1)\,\left|\frac{a-b}{a+b}\right|^p,
\end{equation}
for every $a,b>0$ and $1<p<\infty$.
\end{oss}
\begin{lm}
Let $0<s<1$, then for every $a,b>0$ we have
\begin{equation}
\label{gufetti}
\frac{|a^s-b^s|}{a^s+b^s}\ge \frac{s}{2}\,\frac{|a-b|}{\max\{a,\,b\}}.
\end{equation}
\end{lm}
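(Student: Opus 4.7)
The plan is to reduce the two-variable inequality to a one-variable one by symmetry and scaling, and then derive it from an elementary estimate for the map $\tau \mapsto \tau^s$. Both sides of \eqref{gufetti} are symmetric in $a$ and $b$, so without loss of generality we may assume $a \ge b > 0$. Then $\max\{a,b\} = a$ and $a^s \ge b^s$, and setting $t = b/a \in (0,1]$ the inequality becomes
\[
\frac{1-t^s}{1+t^s} \ge \frac{s}{2}\,(1-t), \qquad t \in (0,1].
\]

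The main ingredient will be the elementary bound $1 - t^s \ge s\,(1-t)$ for $t \in (0,1]$. This can be obtained by writing
\[
1 - t^s = s\,\int_t^1 \tau^{s-1}\,d\tau,
\]
and noting that $\tau^{s-1} \ge 1$ for $\tau \in (0,1]$ since $s - 1 < 0$; alternatively, it expresses the fact that the concave function $\tau \mapsto \tau^s$ lies above its chord joining $(0,0)$ and $(1,1)$ on $[0,1]$.

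With this in hand, the proof concludes in one line: since $t^s \le 1$ gives $1 + t^s \le 2$, we have
\[
\frac{1-t^s}{1+t^s} \ge \frac{1-t^s}{2} \ge \frac{s\,(1-t)}{2},
\]
which is precisely the required inequality after undoing the normalization by $a$. There is no substantive obstacle here; the only point worth being careful about is that the reduction step uses $a \ne 0$, which is guaranteed by $a \ge b > 0$, and that the degenerate case $a = b$ (where both sides vanish) is trivially included.
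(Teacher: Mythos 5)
Your argument is essentially the paper's: reduce by homogeneity to showing $(1-t^s)/(1+t^s)\ge \tfrac{s}{2}\,(1-t)$ for $t=b/a\in(0,1]$, then combine $1-t^s\ge s\,(1-t)$ with the trivial bound $1+t^s\le 2$. Your integral derivation of $1-t^s\ge s\,(1-t)$ is correct and is a perfectly good substitute for the paper's tangent-line argument. One small slip in your aside, though: the ``alternatively'' remark is not right. Saying the concave function $\tau\mapsto\tau^s$ lies \emph{above its chord} from $(0,0)$ to $(1,1)$ gives $\tau^s\ge\tau$ on $[0,1]$, hence $1-t^s\le 1-t$, which is the \emph{opposite} direction from what you need. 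The correct concavity interpretation --- and the one the paper actually invokes --- is that $\tau\mapsto\tau^s$ lies \emph{below its tangent at} $\tau=1$, i.e.\ $\tau^s\le 1+s\,(\tau-1)$, which rearranges to $1-t^s\ge s\,(1-t)$. Since your primary justification (the integral bound $\tau^{s-1}\ge 1$ for $\tau\in(0,1]$) is sound, the proof stands; just drop or fix the chord remark.
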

\begin{proof}
For $a=b$ there is nothing to prove. Without loss of generality, we can assume $a>b$. By defining $t=b/a\in(0,1)$, inequality \eqref{gufetti} is equivalent to prove
\[
\frac{1-t^s}{1+t^s}\ge \frac{s}{2}\,(1-t).
\]
We observe that by the ``below tangent property'' of concave functions, we have
\[
t^s\le 1+s\,(t-1)\qquad \mbox{ i.\,e. }\quad 1-t^s\ge s\,(1-t).
\]
By combining this with the trivial estimate $1+t^s\le 2$, we get the conclusion.
\end{proof}
An essential ingredient in the proof of our main result has been the following pointwise inequality. 
\begin{lm}[Fundamental inequality]
\label{lm:bacini}
Let $1<p<\infty$ and let $a,b,c,d\in\mathbb{R}$, with $a,b>0$ and $c,d\ge 0$. Then there exist two constants $C_2=C_2(p)>0$ and $C_3=C_3(p)>1$, such that
\begin{equation}
\label{lodep}
J_p(a-b)\,\left(\frac{c^p}{a^{p-1}}-\frac{d^p}{b^{p-1}}\right)+C_2\, \left|\frac{a-b}{a+b}\right|^p\,(c^p+d^p)\le C_3\,|c-d|^p.
\end{equation}
\end{lm}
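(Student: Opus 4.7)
\medskip
\noindent\textbf{Proof proposal for Lemma \ref{lm:bacini}.}
The inequality is invariant under the joint swap $(a,c)\leftrightarrow(b,d)$ (both sides change by a sign that cancels, since $J_p$ is odd), so I would assume throughout $a\ge b>0$. The starting point is the symmetric algebraic identity
\[
\frac{c^p}{a^{p-1}}-\frac{d^p}{b^{p-1}}=\frac{c^p+d^p}{2}\left(\frac{1}{a^{p-1}}-\frac{1}{b^{p-1}}\right)+\frac{c^p-d^p}{2}\left(\frac{1}{a^{p-1}}+\frac{1}{b^{p-1}}\right).
\]
Multiplying by $J_p(a-b)=(a-b)^{p-1}\ge 0$ and invoking inequality \eqref{lucio} on the first piece generates the negative ``reservoir''
\[
\frac{J_p(a-b)(c^p+d^p)}{2}\left(\frac{1}{a^{p-1}}-\frac{1}{b^{p-1}}\right)\le -\frac{p-1}{2}\left|\frac{a-b}{a+b}\right|^p(c^p+d^p),
\]
which is exactly the shape of the $C_2$-term in \eqref{lodep}, with the right sign. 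Choosing $C_2<(p-1)/2$ leaves a strictly negative ``slack'' $-\gamma\,|(a-b)/(a+b)|^p(c^p+d^p)$ with $\gamma>0$, and it is enough to bound the remaining cross term $\mathcal{R}:=\tfrac12 J_p(a-b)(c^p-d^p)(a^{1-p}+b^{1-p})$ by $\gamma\,|(a-b)/(a+b)|^p(c^p+d^p)+C_3|c-d|^p$.

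I would control $\mathcal{R}$ by a dichotomy on $t=b/a\in(0,1]$. In the \emph{comparable regime} $t\ge 1/2$, an elementary calculation shows that the quotient $(a-b)^{p-1}(a^{1-p}+b^{1-p})/|(a-b)/(a+b)|^{p-1}$ equals $(1+t)^{p-1}(1+t^{1-p})$ and is uniformly bounded by some $K_p$. Combining this with $|c^p-d^p|\le p\max(c,d)^{p-1}|c-d|$ and the observation $\max(c,d)^p\le c^p+d^p$, I apply Young's inequality $X^{p-1}Y\le \tfrac{p-1}{p}\lambda^p X^p+\tfrac{1}{p\lambda^{p(p-1)}}Y^p$ with $X=\max(c,d)\,|(a-b)/(a+b)|$ and $Y=|c-d|$, choosing $\lambda$ so that $K_p(p-1)\lambda^p\le \gamma$. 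This closes the comparable regime with $C_3$ depending only on $K_p$ and $\gamma$.

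In the \emph{unbalanced regime} $t<1/2$, i.e.\ $a\ge 2b$, the quotient above blows up like $t^{1-p}$, so Young is too wasteful and the previous argument fails. Here I would abandon the symmetric split and return to the original expression, using $((a-b)/a)^{p-1}\le 1$ and $((a-b)/b)^{p-1}\ge 1$ to write
\[
J_p(a-b)\!\left(\frac{c^p}{a^{p-1}}-\frac{d^p}{b^{p-1}}\right)\le c^p-\left(\frac{a-b}{b}\right)^{p-1}d^p,
\]
which after adding $C_2\,|(a-b)/(a+b)|^p(c^p+d^p)\le C_2(c^p+d^p)$ reduces the claim to a one-parameter scalar inequality
\[
(1+C_2)c^p-(m-C_2)d^p\le C_3|c-d|^p,\qquad c,d\ge 0,
\]
with $m=((a-b)/b)^{p-1}$ ranging in $[1,\infty)$. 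Setting $s=d/c$ and studying $\varphi_m(s)=(1+C_2)-(m-C_2)s^p-C_3(1-s)^p$ on $[0,1]$ by elementary calculus (the critical point $s^\ast=(1+((m-C_2)/C_3)^{1/(p-1)})^{-1}$ is the unique maximum) yields an explicit $C_3=C_3(p,C_2)$ ensuring $\varphi_m\le 0$ whenever $m\ge 1+\kappa(p,C_2)$ for a small constant $\kappa$.

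The main obstacle I anticipate is the \emph{matching} of the two regimes: the threshold $t=1/2$ is convenient but not forced by the problem, and the unbalanced case only closes if $m=((a-b)/b)^{p-1}$ exceeds $1+\kappa(p,C_2)$, whereas $m$ can be arbitrarily close to $1$ at the boundary. I would resolve this by either enlarging the comparable regime slightly to $t\ge t_p$ with $t_p$ just below $1/2$ (keeping $K_p$ finite) so that in the complementary range $m\ge 1+\kappa$ automatically holds, or by choosing $C_2$ small enough (e.g.\ $C_2=(p-1)/8$) that $\kappa(p,C_2)$ is small. Either way, after this calibration the two estimates combine to give \eqref{lodep} with explicit constants $C_2=C_2(p)>0$ and $C_3=C_3(p)>1$.
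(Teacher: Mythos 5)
Your proposal is correct and takes a genuinely different route from the paper's. The paper substitutes $t=b/a$, $A=d/c$ up front, rewrites the claim as a scalar inequality in $(t,A)$, exploits the Picone-type bound $\Phi(t)\le(1-A)^p$ where $\Phi(t)=(1-t)^{p-1}(1-A^p/t^{p-1})$ attains its maximum at $t=A$, and then splits on $A\le\tfrac12$ versus $A>\tfrac12$; the hard sub-case $A>\tfrac12$, $t<A$ is handled by a second-order Taylor expansion of $\Phi$ around $t=A$ with careful estimates of $\Phi''$, which bifurcate again for $1<p\le 2$ and $p>2$ and require the auxiliary Lemma \ref{lm:rutto}. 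Your proof instead symmetrizes algebraically in $(c,d)$, immediately generating the negative reservoir via \eqref{lucio} plus a cross term $\mathcal{R}$, and then splits only on $t=b/a$: Young's inequality absorbs $\mathcal{R}$ when $a$ and $b$ are comparable, and a one-variable scalar inequality in $s=d/c$ handles the unbalanced regime. What your route buys is a cleaner conceptual structure — no Taylor expansion, no $\Phi''$ bookkeeping, no split on $p\lessgtr 2$; the price is the calibration at the crossover between regimes, which as you say is the delicate point.

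On that calibration: of the two fixes you offer, only the first (moving the threshold to a fixed $t_p<\tfrac12$) works. The second — shrinking $C_2$ — does not by itself: the scalar function you study has $\varphi_m(1)=1+2C_2-m$, which is positive for every $C_2>0$ once $m$ is close enough to $1$, and $m=((1-t)/t)^{p-1}\searrow 1$ as $t\nearrow\tfrac12$, so no choice of small $C_2$ rescues the boundary $t\approx\tfrac12$. Pushing the threshold to, say, $t_p=\tfrac13$ resolves it: the ratio $(1+t)^{p-1}(1+t^{1-p})$ is decreasing on $(0,1)$, so $K_p$ stays finite in the comparable regime $t\ge t_p$, while in the unbalanced regime $m\ge 2^{p-1}>1$, and then $C_2<\min\{(p-1)/2,\,(2^{p-1}-1)/2\}$ together with $C_3$ large enough (the maximum of the two regime constants, which only helps in both) closes the argument. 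The remaining steps — the algebraic identity, the Young absorption with $X=\max(c,d)\,|(a-b)/(a+b)|$, $Y=|c-d|$, the monotonicity in $m$ of $\tfrac{m-C_2}{(1+((m-C_2)/C_3)^{1/(p-1)})^{p-1}}$ — all check out.
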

\begin{proof}
We observe that for $a=b$ there is nothing to prove, since the left-hand side vanishes. Without loss of generality, we can assume $a>b$.
Also notice that if $c\le d$, then
\[
\begin{split}
(a-b)^{p-1}\,\left(\frac{c^p}{a^{p-1}}-\frac{d^p}{b^{p-1}}\right)&\le (a-b)^{p-1}\,\left(\frac{d^p}{a^{p-1}}-\frac{d^p}{b^{p-1}}\right)\\
&=- d^p\,(a-b)^{p-1}\,\left(\frac{1}{b^{p-1}}-\frac{1}{a^{p-1}}\right)\\
&\le -(p-1)\,d^p\,\left|\frac{a-b}{a+b}\right|^p\le -(p-1)\,\frac{c^p+d^p}{2}\,\left|\frac{a-b}{a+b}\right|^p,
\end{split}
\]
where in the second inequality we used \eqref{lucio}. Thus inequality \eqref{lodep} holds with $C_2=(p-1)/2$ and $C_3>0$ arbitrary.
\vskip.2cm\noindent
We assume now that $a>b$ and $c>d$, then by setting 
\[
t=b/a\in (0,1) \qquad \mbox{ and }\qquad A=d/c\in[0,1),
\] 
inequality \eqref{lodep} is equivalent to
\begin{equation}
\label{torp}
(1-t)^{p-1}\,\left(1-\frac{A^p}{t^{p-1}}\right)+C_2\,\left(\frac{1-t}{1+t}\right)^p\,(1+A^p)\le C_3\,(1-A)^p,
\end{equation}
with $t\in(0,1)$ and $A\in[0,1)$.
We study the function
\begin{equation}\label{phi}
\Phi(t)=(1-t)^{p-1}\,\left(1-\frac{A^p}{t^{p-1}}\right),\qquad t\in(0,1),
\end{equation}
which is maximal for $t=A$. This in particular implies\footnote{We observe that this is equivalent to 
\[
J_p(a-b)\,\left(\frac{c^p}{a^{p-1}}-\frac{d^p}{b^{p-1}}\right)\le |c-d|^p,
\]
which is a discrete version of {\it Picone's inequality}, see \cite[Proposition 4.2]{BF}.}
\begin{equation}
\label{picone}
(1-t)^{p-1}\,\left(1-\frac{A^p}{t^{p-1}}\right)\le (1-A)^p.
\end{equation}
We now distinguish two cases: 
\[
\mbox{ either }\qquad 0\le A\le \frac{1}{2}\qquad \mbox{ or }\qquad \frac{1}{2}<A<1.
\]
$\boxed{\mbox{\bf A. Case $0\le A\le 1/2$.}}$ This is the simplest case. Indeed, we have
\[
\frac{1-t}{1+t}\le 1\qquad \mbox{ and }\qquad (1-A)^p\ge \frac{1}{2^p}.
\]
Thus by using this and \eqref{picone}, we get
\[
(1-t)^{p-1}\,\left(1-\frac{A^p}{t^{p-1}}\right)+\frac{C_3-1}{2^{p+1}}\,\left(\frac{1-t}{1+t}\right)^p\,(1+A^p)\le C_3\,(1-A)^p,
\]
which is \eqref{torp} with $C_2=(C_3-1)/2^{p+1}$ and $C_3>1$ arbitrary.
\vskip.2cm\noindent
$\boxed{\mbox{\bf B. Case $1/2< A<1$.}}$ Here in turn we consider two subcases: $A\le t$ and $0<t<A$.
\vskip.2cm\noindent
{\tt B.1. Case $1/2< A<1$ and $t\ge A$.} This is easy, since we directly have
\[
(1-t)^p\le (1-A)^p,
\]
and thus
\[
\left(\frac{1-t}{1+t}\right)^p\le (1-t)^p\le (1-A)^p.
\]
By using this and \eqref{picone}, we get
\[
(1-t)^{p-1}\,\left(1-\frac{A^p}{t^{p-1}}\right)+\frac{C_3-1}{2}\,\left(\frac{1-t}{1+t}\right)^p\,(1+A^p)\le C_3\,(1-A)^p,
\]
which is \eqref{lodep} with $C_2=(C_3-1)/2$ and $C_3> 1$ arbitrary.
\vskip.2cm\noindent
{\tt B.1. Case $1/2< A<1$ and $0<t<A$.} Here we need to study in more details the function $\Phi$ defined in \eqref{phi}.
We have
\[
\begin{split}
\Phi''(t)&=(p-1)\,(1-t)^{p-3}\,\left[p-2+2\,\frac{A^p}{t^p}-p\,\frac{A^p}{t^{p+1}}\right]\\
&=(p-1)\,(1-t)^{p-2}\,\left[\frac{p-2}{1-t}\,\left(1-\frac{A^p}{t^p}\right)-p\,\frac{A^p}{t^{p+1}}\right],\qquad t\in (0,1).
\end{split}
\]
By an easy computation, we can see that
\[
t\mapsto p-2+2\,\frac{A^p}{t^p}-p\,\frac{A^p}{t^{p+1}}
\]
is monotone increasing, thus we get
\[
\Phi''(t)\le (p-1)\,(1-t)^{p-3}\,p\,\left(1-\frac{1}{A}\right),\qquad \mbox{ for }0<t<A.
\]
In particular, we get that $\Phi$ is concave on the interval $(0,A)$.
We use a second order Taylor expansion around the maximum point $t=A$, i.e.
\begin{equation}
\label{sviluppino}
\begin{split}
\Phi(t)&=\Phi(A)+\int_t^A \Phi''(s)\,(s-t)\,ds\\
&=(1-A)^p\\
&+(p-1)\,\int_t^A (1-s)^{p-2}\,\left[\frac{p-2}{1-s}\,\left(1-\frac{A^p}{s^p}\right)-p\,\frac{A^p}{s^{p+1}}\right]\,(s-t)\,ds,
\end{split}
\end{equation}
where we used that $\Phi'(A)=0$.
In order to estimate the remainder term inside the integral, we distinguish once again two cases:
\begin{itemize}
\item if $1<p\le 2$, then by using Lemma \ref{lm:rutto} below and the fact that $A\ge 1/2$, we get
\[
\begin{split}
\int_t^A &(1-s)^{p-2}\,\left[\frac{p-2}{1-s}\,\left(1-\frac{A^p}{s^p}\right)-p\,\frac{A^p}{s^{p+1}}\right]\,(s-t)\,ds\\
&=\int_t^A \left[(p-2)\,\frac{s^p-A^p}{1-s}-p\,\frac{A^p}{s}\right]\,\frac{(1-s)^{p-2}}{s^p}\,(s-t)\,ds\\
&\le -\frac{p\,(p-1)}{2^p}\,\int_t^A  \frac{(1-s)^{p-2}}{s^p}\,(s-t)\,ds\\
&\le -\frac{p\,(p-1)}{2^p}\,(1-t)^{p-2}\,\int_t^A (s-t)\,ds\\
&=-\frac{p\,(p-1)}{2^{p+1}}\,(1-t)^{p-2}\,(A-t)^2.
\end{split}
\] 
By using the previous estimate in \eqref{sviluppino}, we have
\begin{equation}
\label{p1}
(1-t)^{p-1}\,\left(1-\frac{A^p}{t^{p-1}}\right)\le (1-A)^p-\frac{p\,(p-1)^2}{2^{p+1}}\,(1-t)^{p-2}\,(A-t)^2.
\end{equation}
It is now sufficient to observe that
\[
\begin{split}
(1-t)^p&=(1-t)^{p-2}\,(1-t)^2\\
&\le (1-t)^{p-2}\,\Big(2\,(A-t)^2+2\,(1-A)^2\Big)\\
&\le 2\,(1-t)^{p-2}\,(A-t)^2+2\,(1-A)^p
\end{split}
\]
and thus
\begin{equation}
\label{p2}
C_2\,\left(\frac{1-t}{1+t}\right)^p\,(1+A^p)\le 4\,C_2\,(1-t)^{p-2}\,(A-t)^2+4\,C_2\,(1-A)^p.
\end{equation}
If we sum up \eqref{p1} and \eqref{p2} and choose $C_3>1$ arbitrary and
\[
C_2=\frac{1}{4}\,\min\left\{C_3-1,\,\frac{p\,(p-1)^2}{2^{p+1}}\right\},
\] 
we get again the desired conclusion \eqref{torp}.
\vskip.2cm
\item if $p> 2$, then by using that $s\le A$
\[
\begin{split}
\int_t^A &(1-s)^{p-2}\,\left[\frac{p-2}{1-s}\,\left(1-\frac{A^p}{s^p}\right)-p\,\frac{A^p}{s^{p+1}}\right]\,(s-t)\,ds\\
&\le -p\,A^p\,\int_t^A  \frac{(1-s)^{p-2}}{s^{p+1}}\,(s-t)\,ds\\
&\le -\frac{p}{2^p}\,\int_t^A (1-s)^{p-2}\,(s-t)\,ds.
\end{split}
\]
The last integral can be explicitly computed, integrating by parts: we have
\[
\begin{split}
\int_t^A (1-s)^{p-2}\,(s-t)\,ds&=-\frac{1}{p-1}\,(1-A)^{p-1}\,(A-t)+\frac{1}{p\,(p-1)}\,(1-t)^p\\
&-\frac{1}{p\,(p-1)}\,(1-A)^p.
\end{split}
\]
We use Young's inequality to estimate the first term on the right-hand side
\[
\begin{split}
-\frac{1}{p-1}\,(1-A)^{p-1}\,(A-t)&\ge -\frac{1}{p}\,\varepsilon^{-\frac{1}{p-1}}\,(1-A)^p\\
&-\frac{\varepsilon}{p\,(p-1)}\,(A-t)^p,
\end{split}
\]
with $\varepsilon>0$.
We use these estimates in \eqref{sviluppino}. This in turn gives
\[
\begin{split}
(1-t)^{p-1}\,\left(1-\frac{A^p}{t^{p-1}}\right)&\le (1-A)^p+\frac{p-1}{2^p}\,\varepsilon^{-\frac{1}{p-1}}\,(1-A)^p\\
&+\frac{\varepsilon}{2^p}\,(A-t)^p\\
&-\frac{1}{2^p}\,(1-t)^p+\frac{1}{2^p}\,(1-A)^p.
\end{split}
\]
By choosing $\varepsilon=1/2$ and using that $A-t\le 1-t$, we then obtain
\[
\begin{split}
(1-t)^{p-1}\,\left(1-\frac{A^p}{t^{p-1}}\right)&\le C_3\,(1-A)^p-\frac{1}{2^{p+1}}\,(1-t)^p,
\end{split}
\]
with
\[
C_3=1+\frac{p-1}{2^p}\,2^\frac{1}{p-1}+\frac{1}{2^p}.
\]
Once again, this is enough to get the desired conclusion, since 
\[
C_2\,\left(\frac{1-t}{1+t}\right)^p\,(1+A^p)\le 2\,C_2\,(1-t)^{p}.
\]
Then we only need to choose $C_2=1/2^{p+2}$ in order to get \eqref{torp}.
\end{itemize}
We thus concluded the proof.
\end{proof}
\begin{oss}[The constants $C_2$ and $C_3$]
An inspection of the proof reveals that in the case $1<p\le 2$, the constant $C_3$ can be chosen arbitrarily close to $1$. Accordingly, we have 
\[
\begin{split}
C_2&=\min\left\{\frac{C_3-1}{4},\,\frac{1}{4}\,\frac{p\,(p-1)^2}{2^{p+1}},\,\frac{C_3-1}{2^{p+1}},\frac{p-1}{2}\right\}\\
&=\frac{1}{2^{p+1}}\,\min\left\{\frac{p\,(p-1)^2}{4},\,C_3-1\right\},
\end{split}
\]
and thus it degenerates to $0$ as $C_3 \searrow 1$.
\end{oss}
\begin{oss}
\label{oss:spiegone}
Inequality \eqref{lodep} looks similar to the pointwise inequality which can be found right before \cite[equation (3.12), page 1289]{DKP}, where the term
\[
\left|\frac{a-b}{a+b}\right|^p\,(c^p+d^p),
\]
is replaced by
\[
\left|\log a-\log b\right|^p\,\min\{c^p,\,d^p\}.
\]
The main difference is that in our inequality the terms $c$ and $d$ play a symmetric role. This means that the quantity $(c^p+d^p)$ is unchanged when we exchange the roles of $c$ and $d$, while this is not the case for $\min\{c^p,\, d^p\}$.
This property is a crucial feature in order to prove Theorem \ref{teo:torino}: precisely, this is hidden in the estimate \eqref{hidden}.
 On the other hand, it is easy to see that the inequality in \cite{DKP} {\it can not} have this property, i.e. one can not replace  
\[
\left|\log a-\log b\right|^p\,\min\{c^p,\,d^p\},
\]
by 
\[
\left|\log a-\log b\right|^p\,(c^p+d^p).
\]
Thus the inequality of \cite{DKP} does not seem useful in order to prove Hardy inequality.
\end{oss}
In the previous result, we needed the following inequality in order to deal with the case $1<p\le 2$.
\begin{lm}
\label{lm:rutto}
Let $1<p\le 2$, for every $s\in(0,1)$ and $A\in[0,1]$ we have
\[
(p-2)\,\frac{s^p-A^p}{1-s}-p\,\frac{A^p}{s}\le -p\,(p-1)\,A^p.
\]
\end{lm}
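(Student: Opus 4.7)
The plan is to rearrange the inequality and split into two cases according to the sign of $s^p-A^p$. Moving $p(p-1)A^p$ to the right-hand side and rewriting $p-2 = -(2-p)$, the claim becomes equivalent to
\[
\frac{(2-p)\,(A^p-s^p)}{1-s} \;\le\; \frac{p\,A^p\,(1-(p-1)s)}{s}. \qquad (\star)
\]
Since $1<p\le 2$ gives $p-1\le 1$, and $s\le 1$, the factor $1-(p-1)s$ is nonnegative, so the right-hand side of $(\star)$ is always $\ge 0$.

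The case $s\ge A$ is then immediate: one has $A^p-s^p\le 0$, while $2-p\ge 0$ and $1-s>0$, so the left-hand side of $(\star)$ is $\le 0$ and the inequality follows.

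The substantive case is $s<A$ (so in particular $A>0$). The key observation is that the hypothesis $A\le 1$ gives $s/A\ge s$, hence $(s/A)^p\ge s^p$. Writing $(A^p-s^p)/A^p = 1-(s/A)^p$, this yields
\[
(2-p)\,s\,\bigl(1-(s/A)^p\bigr)\;\le\;(2-p)\,s\,(1-s^p).
\]
I would then invoke the elementary Bernoulli-type estimate $1-s^p\le p(1-s)$ on $[0,1]$, which follows from noting that $g(s)=p(1-s)-(1-s^p)$ satisfies $g(1)=0$ and $g'(s)=p(s^{p-1}-1)\le 0$ on $[0,1]$. After multiplying $(\star)$ through by $s(1-s)/A^p>0$ and chaining the two bounds, the target reduces to
\[
p(2-p)\,s\,(1-s)\;\le\;p\,(1-s)\,(1-(p-1)s),
\]
and dividing by $p(1-s)>0$ this collapses to $(2-p)s+(p-1)s\le 1$, i.e.\ $s\le 1$, which is the hypothesis.

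The only delicate point is locating where the bound $A\le 1$ enters; without exploiting it in the step $(s/A)^p\ge s^p$, the factor $(A^p-s^p)/A^p$ cannot be controlled tightly enough to close the argument. The rest is one-variable calculus, and the restriction $p\le 2$ is essential because it is what makes $2-p\ge 0$ and reduces the problem to estimating a single positive quantity on the left.
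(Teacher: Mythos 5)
Your proof is correct. It takes a somewhat different route from the paper's: you rearrange the inequality to the equivalent form $(\star)$ and split on the sign of $A^p-s^p$, whereas the paper avoids any case distinction by first establishing the single clean bound
\[
\frac{A^p-s^p}{1-s}\le p\,A^p,
\]
valid for all $s\in(0,1)$ and $A\in[0,1]$ (this is equivalent to $A^p(1-p+ps)\le s^p$, which holds because $A^p\le 1$ and $1-p(1-s)\le s^p$), and then chains
\[
(2-p)\,\frac{A^p-s^p}{1-s}-p\,\frac{A^p}{s}\le (2-p)\,p\,A^p-p\,\frac{A^p}{s}\le (2-p)\,p\,A^p-p\,A^p=-p\,(p-1)\,A^p,
\]
using only $1/s\ge 1$. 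Both proofs ultimately rest on the same two ingredients — that $A\le 1$ forces $A^p\le 1$, and the tangent-line (Bernoulli) estimate $1-s^p\le p(1-s)$ — but you make these dependencies explicit where the paper compresses them into ``easily seen to be true,'' at the cost of a case split and a slightly longer chain. Either argument is satisfactory; the paper's is shorter to state, yours is more self-contained about where each hypothesis is used.
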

\begin{proof}
We rewrite
\begin{equation}
\label{rutto}
(p-2)\,\frac{s^p-A^p}{1-s}-p\,\frac{A^p}{s}=(2-p)\,\frac{A^p-s^p}{1-s}-p\,\frac{A^p}{s}.
\end{equation}
We then observe that 
\begin{equation}
\label{ele}
\frac{A^p-s^p}{1-s}\le p\,A^p.
\end{equation}
Indeed, the latter is equivalent to
\[
A^p\,(1-p+s\,p)\le s^p,
\]
which is easily seen to be true. 
By using \eqref{ele} in \eqref{rutto}, we now obtain
\[
\begin{split}
(p-2)\,\frac{s^p-A^p}{1-s}-p\,\frac{A^p}{s}&\le (2-p)\,p\,A^p-p\,\frac{A^p}{s}\le -p\,(p-1)\,A^p,
\end{split}
\]
as desired.
\end{proof}
\medskip


\begin{thebibliography}{100}


\bibitem{BD} K. Bogdan, B. Dyda, The best constant in a fractional Hardy inequality, Math. Nachr., {\bf 284} (2011), 629--638. 

\bibitem{BF} L. Brasco, G. Franzina, Convexity properties of Dirichlet integrals and Picone-type inequalities, Kodai Math. J., {\bf 37} (2014), 769--799.

\bibitem{BP} L. Brasco, E. Parini, The second eigenvalue of the fractional $p-$Laplacian, 
Adv. Calc. Var., {\bf 9} (2016), 323--355. 

\bibitem{BPS} L. Brasco, E. Parini, M. Squassina, Stability of variational eigenvalues for the fractional $p-$Laplacian, Discrete Contin. Dyn. Syst., {\bf 36} (2016), 1813--1845. 

\bibitem{BV} C. Bucur, E. Valdinoci, Nonlocal Diffusion and Applications, Lecture Notes of the Unione Matematica Italiana, {\bf 20}. Springer, [Cham]; Unione Matematica Italiana, Bologna, 2016.

\bibitem{CS} Z.-Q. Chen, R. Song, Hardy inequality for censored stable processes, Tohoku Math. J., {\bf 55} (2003), 439--450.

\bibitem{CF} E. Cinti, F. Ferrari, Geometric inequalities for fractional Laplace operators and applications, NoDEA Nonlinear Differential Equations Appl., {\bf 22} (2015), 1699--1714. 

\bibitem{Da} E. B. Davies, A review of Hardy inequalities. The Maz'ya anniversary collection, Vol. 2 (Rostock, 1998), 55--67, Oper. Theory Adv. Appl., {\bf 110}, Birkh\"auser, Basel, 1999.

\bibitem{DKP} A. Di Castro, T. Kuusi, G. Palatucci, Local behavior of fractional $p-$minimizers, Ann. Inst. H. Poincar\'e Anal. Non Lin\'eaire, {\bf 33} (2016), 1279--1299. 

\bibitem{Dy} B. Dyda, A fractional order Hardy inequality, Illinois J. Math., {\bf 48} (2004), 575--588. 

\bibitem{DV} B. Dyda, A. V. V\"ah\"akangas, A framework for fractional Hardy inequalities, Ann. Acad. Sci. Fenn. Math., {\bf 39} (2014), 675--689.

\bibitem{FS} R. L. Frank, R. Seiringer, Sharp fractional Hardy inequalities in half-spaces. Around the research of Vladimir Maz'ya. I, 161--167,
Int. Math. Ser. (N. Y.), {\bf 11}, Springer, New York, 2010. 

\bibitem{FSspace} R. L. Frank, R. Seiringer, Non-linear ground state representations and sharp Hardy inequalities, J. Funct. Anal., {\bf 255} (2008), 3407--3430. 

\bibitem{HKP} H. P. Heinig, A. Kufner, L.-E. Persson, On some fractional order Hardy inequalities, J. Inequal. Appl., {\bf 1} (1997), 25--46.

\bibitem{IMS} A. Iannizzotto, S. Mosconi, M. Squassina, Global H\"older regularity for the fractional $p-$Laplacian, Rev. Mat. Iberoam., {\bf 32} (2016), 1353--1392.

\bibitem{Ka} M. Kassmann, A priori estimates for integro-differential operators with measurable kernels, Calc. Var. Partial Differential Equations, {\bf 34} (2009), 1--21.

\bibitem{LS} M. Loss, C. Sloane, Hardy inequalities for fractional integrals on general domains, J. Funct. Anal., {\bf 259} (2010), 1369--1379.

\bibitem{MS} V. Maz'ya, T. Shaposhnikova,
On the Bourgain, Brezis, and Mironescu theorem concerning limiting embeddings of fractional Sobolev spaces, J. Funct. Anal., {\bf 195} (2002), 230--€"238. 

\bibitem{Mo} J. Moser, On Harnack's Theorem for Elliptic Differential Equations, Comm. Pure Appl. Math., {\bf 16} (1961), 577--591.

\bibitem{OK} B. Opic, A. Kufner, Hardy-type inequalities. Pitman Research Notes in Mathematics Series, {\bf 219}. Longman Scientific \& Technical, Harlow, 1990.

\bibitem{Po} A. Ponce, A new approach to Sobolev spaces and connections to $\Gamma-$convergence, Calc. Var. Partial Differential Equations, {\bf 19} (2004), 229--255. 

\end{thebibliography}
\end{document}